\documentclass[a4paper,12pt]{amsart}
\usepackage[margin=2.5cm]{geometry}

\usepackage{amsmath}
\usepackage{amsfonts}
\usepackage{amssymb}
\usepackage{xcolor}
\usepackage{mathrsfs}
\usepackage{etoolbox}
\usepackage{array}
\AtBeginEnvironment{theorem}{\hangindent=2em}
\renewenvironment{proof}{{\bfseries Proof.}}{\qed}
\numberwithin{equation}{section} 
\newtheorem{theorem}{Theorem}[section] 
 
\newtheorem{cor}[theorem]{Corollary} 
\newtheorem{lemma}[theorem]{Lemma} 

\theoremstyle{definition}
 
\newtheorem{remark}[theorem]{Remark} 
\newtheorem{example}[theorem]{Example}

\newcommand{\n}{\newline}

\newcommand{\imp}{\Rightarrow}
\newcommand{\z}{\mathbb{Z}}

\newcommand{\m}{\gamma^{m}_{p}}

\newcommand{\thmref}[1]{Theorem~\ref{#1}}
\newcommand{\lemref}[1]{Lemma~\ref{#1}}

\newcommand{\corref}[1]{Corollary~\ref{#1}}
\newcommand{\eqnref}[1]{~{equation~(\ref{#1})}}

\numberwithin{equation}{section} 
 
\usepackage[backref]{hyperref}

\usepackage[font=small,labelfont=bf]{caption}

\begin{document}

\title[Combinatorial Structure of Inert and Ambiguous Classes]{Combinatorial Structure of Inert and Ambiguous Classes in Modular Group}
\author[D. Das]{Debattam Das}
\address{Indian Institute of Technology Kanpur, Kanpur 208016, Uttar Pradesh, India}
\email{debattam123@gmail.com }
\author[K. Gongopadhyay]{Krishnendu Gongopadhyay }
\address{Indian Institute of Science Education and Research (IISER) Mohali, Knowledge City,  Sector 81, S.A.S. Nagar 140306, Punjab, India}
\email{krishnendu@iisermohali.ac.in}
\author[K. Mishra]{Khushi Mishra}
\address{Indian Institute of Science Education and Research (IISER) Mohali, Knowledge City,  Sector 81, S.A.S. Nagar 140306, Punjab, India}
\email{mp22005@iisermohali.ac.in}
\makeatletter
\@namedef{subjclassname@2020}{\textup{2020} Mathematics Subject Classification}
\makeatother
\keywords{ counting problem, combinatorial structure, reduced words,  modular group, conjugacy class, primitive elements, inert classes, ambiguous classes, reciprocal classes.}
\subjclass[2020]{Primary 11F06; Secondary 20H05, 20H10, 20E45}
\date{\today}
    \begin{abstract}
We study inert, and ambiguous conjugacy classes in the modular group $\mathrm{PSL}(2,\mathbb{Z})$ from a purely combinatorial perspective. Using word length in the free product representation $\mathbb{Z}_2 * \mathbb{Z}_3$ of the modular group, we obtain exact counting formulas and asymptotic growth rates for inert and ambiguous classes. Our results provide the first counting formulas for inert classes obtained independently of Sarnak’s analytic trace-based methods, while also establishing a combinatorial framework for ambiguous classes. 
\end{abstract}

\maketitle

\section{Introduction}
Recall that the modular group $\Gamma$ be generated by the transformations  
\[
T: z \mapsto z+1 \quad \text{and} \quad S: z \mapsto -\frac{1}{z}.
\]
The group $\Gamma$ can also be identified geometrically with $\mathrm{PSL}(2,\mathbb Z)$ and with the free product $\mathbb{Z}_2 \ast  \mathbb{Z}_3$.  This paper is inspired by the work of Sarnak \cite{sarnak}, where counting problems are discussed for reciprocal, ambiguous, and inert classes.

Let $\{\gamma\}_\Gamma$ denote the conjugacy class of an element $\gamma \in \Gamma$. The group $\Gamma$ acts on the hyperbolic plane $\mathbb{H}^2$, and its elements are classified as elliptic, parabolic, or hyperbolic according to the location of their fixed points, equivalently by the absolute value of the trace $t(\gamma)$; see \cite{MR1177168}. Primitive hyperbolic conjugacy classes in $\Gamma$ are in natural correspondence with primitive closed geodesics on the modular surface $\Gamma \backslash \mathbb{H}^2$.

In a seminal paper, Sarnak~\cite{sarnak} studied special conjugacy classes in $\Gamma$ using analytic methods arising from the Selberg trace formula and Gauss's quadratic reciprocity. He introduced three involutions that acts on $\Gamma$, namely $\phi_R$, $\phi_w$, and $\phi_{Amb}$, which together with identity $Id$ generate the Klein four-group
\[
K=\langle Id, \phi_R, \phi_w, \phi_{Amb} \rangle.
\]
These involutions are given by 
\[
\phi_R(\gamma) = \gamma^{-1}, \quad 
\phi_w(\gamma) = w^{-1} \gamma w, \quad 
\phi_{Amb} = \phi_R \circ \phi_w = \phi_w \circ \phi_R,
\]  
where  
\[
w = \begin{pmatrix} 1 & 0 \\ 0 & -1 \end{pmatrix} \in \mathrm{PGL}(2, \mathbb{Z}) \text{ is an reflection}.
\]  
Let $P$ denote the set of all primitive hyperbolic elements, and let $\Pi$ be the corresponding set of their conjugacy classes. Geometrically, each $p \in P$ corresponds to an oriented primitive closed geodesic on the quotient surface $\Gamma \backslash \mathbb{H}^2$, with length  
\[
\ell(p) = \log N(p), \quad \text{where } 
N(p) = \bigg(\frac{t(p) + \sqrt{t(p)^2 - 4}}{2}\bigg)^2.
\]

For any subgroup $H \subset K$, define  
\[
\Pi_H = \{\{p\} \in \Pi : \phi(\{p\}) = \{p\} \text{ for all } \phi \in H\}.
\]  
In particular, $\Pi_{\langle e \rangle} = \Pi$. Sarnak obtained asymptotic counting formulas for these symmetry classes with respect to hyperbolic length. The elements of $\Pi_H$ are referred to as \emph{reciprocal}, \emph{inert}, or \emph{ambiguous}, depending on whether $H$ is generated by $\phi_R$, $\phi_w$, or $\phi_{Amb}$.

The purpose of the present work is to study these classes from a purely combinatorial and symbolic perspective. Instead of hyperbolic length, we measure size by word length in the presentation $\Gamma \cong \mathbb{Z}_2 * \mathbb{Z}_3$. This approach allows explicit descriptions of inert and ambiguous conjugacy classes in terms of reduced words and leads to exact counting formulas by word length. For primitive and reciprocal classes, counting formulas based on word length were obtained by Basmajian and Suzzi Valli in \cite{combigrowth}. Here we consider the inert and ambiguous classes in the modular group, completing the combinatorial picture.

Representing hyperbolic conjugacy classes by cyclically reduced $(AB)$-words, we show that invariance under elements of $K$ translates into anti-periodicity or palindromic symmetry constraints on the associated exponent sequences. These constraints stratify admissible word lengths according to the highest power of $2$ dividing them, reflecting the involutive sign-reversing nature of the symmetry. This symbolic framework yields exact counting formulas for inert and ambiguous conjugacy classes of a given word length, distinguishes primitive from non-primitive classes, and provides precise asymptotic growth estimates. We summarize these structural results in Theorem~\ref{inert} and Theorem~\ref{ambiguous}. 

For related work on this direction, see \cite{paulin2} and \cite{paulin1}, where geometric methods based on divergent geodesics and common perpendiculars recover Sarnak’s counts for ambiguous and reciprocal classes in terms of geodesic lengths, equivalently traces. Further results building on Sarnak’s counting of reciprocal and primitive classes can be found in \cite{bv2},  \cite{bk}, \cite{es}, also see \cite{bmv}, \cite{das2024combinatorial}, \cite{dg2025}. 

Despite extensive research on counting special classes in combinatorial groups from a group-theoretic perspective, see, e.g., \cite{gs},  \cite{park}, \cite{rivin},  to the best of our knowledge, no results exist for inert classes obtained by methods other than Sarnak’s. This work provides the first such results using basic combinatorial methods from group theory (see, e.g., \cite{mgs}). Our approach not only complements existing analytic methods for inert and ambiguous classes in the modular group, but also offers a new combinatorial perspective on this  problem.
\subsection{Notations and Main Results}
We now introduce notation and state the main results.

Define $\langle P \rangle$ to be the set of all products of powers of primitive elements,
\[
\langle P \rangle := \left\{\, \prod_{i=1}^{n} p_i^{k_i} : n\geq 1,\ p_i \in P,\ k_i \in \mathbb{Z}\,\right\},
\]
and let
\[
\Pi_H' := \{\{p\}\in\langle\Pi\rangle : \phi(\{p\})=\{p\}\ \forall\,\phi\in H\},
\]
where $\langle\Pi\rangle$ denotes the set of conjugacy classes in $\langle P\rangle$. We write
\[
\mathscr{I} := \Pi_{\langle\phi_w\rangle}', 
\qquad
\mathscr{A} := \Pi_{\langle\phi_{\mathrm{Amb}}\rangle}'
\]
and refer to $\mathscr I$ and $\mathscr A$ as the sets of inert and ambiguous classes, respectively.

Using the identification $\Gamma \simeq \mathbb{Z}_2 \ast  \mathbb{Z}_3$, we define normal forms for conjugacy classes in $\mathscr{I}$ and $\mathscr{A}$ via combinatorial methods and study their growth rates by systematic counting of representatives.

Let the generators of $\mathbb{Z}_2 \ast  \mathbb{Z}_3$ be $A$ and $B$, with $A$ corresponding to $S: z \mapsto -1/z$ and $B$ corresponding to $ST: z \mapsto -1/(z+1)$. Henceforth we write $S=A$ and $ST=B$. For $\gamma \in \Gamma$, let $\|\gamma\|$ denote its word length with respect to the symmetric generating set $\{A,B,B^{-1}\}$, and let $W$ be the set of reduced words in these generators. Let $W_{2t}$ be the set of reduced words of length $2t$, and $W_{2t}(AB)$ the set of $(AB)$–words of length $2t$ in $W$.

The conjugacy class of $\gamma$ is denoted $[\gamma]$, with length
\[
\|[\gamma]\| = \min\{\|h\| : h \in [\gamma]\}.
\]
A word is cyclically reduced if every cyclic permutation remains reduced; such representatives realize the minimal length in their conjugacy class and differ only by cyclic permutation. An $(AB)$–word is a reduced word starting with $A$ and ending with $B$ or $B^{-1}$. As shown in \cite{combigrowth}, every $\gamma \in W$ not conjugate to a generator is conjugate to an $(AB)$–word. If an $(AB)$–word has even length $2t$, then a cyclic shift by $k$ positions is again an $(AB)$–word if and only if $k$ is even, and a $(BA)$–word if and only if $k$ is odd. 

\medskip Now, for real-valued functions $f, g : [0,\infty) \to \mathbb{R}$.  We write $f \sim g$ when
$$\frac{f(t)}{g(t)} \to 1 \qquad \text{as } t \to \infty.$$
We say $f\simeq g$, if there exist positive constants $C_1,C_2,C_3,C_4$ and
$t_0$ such that
$$
C_1 g(t)+C_3\leq f(t)\leq C_2 g(t)+C_4,~~~~~~\text{for all $t\geq t_0$.}
$$
 Also, whenever $f \simeq g$, we may equivalently denote this by $f(t)=\Theta(g(t))$.

\medskip We now have the following theorems. 
\begin{theorem}\label{inert}

\begin{enumerate}
    \item Let $\mathscr{I}_{\leq2t}$ be the set of all inert conjugacy classes of length at most $2t$. Then, as $t \to \infty$, we have
\[
\lvert\,\mathscr{I}_{\leq2t}\,\rvert \sim \frac{2^{\lfloor \frac{t}{2} \rfloor}}{\lfloor \frac{t}{2} \rfloor +1},
\]
where $\lfloor \frac{t}{2} \rfloor$ is the greatest integer less than or equal to $\frac{t}{2}$.
\item The set of all primitive inert classes of length at most $2t$ is asymptotic to the set of all inert conjugacy classes of length at most $2t$.
\end{enumerate}

\end{theorem}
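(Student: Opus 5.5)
We would prove the theorem by translating inertness into a symmetry of cyclic $\pm1$ sequences and then counting necklaces. By \cite{combigrowth}, every class of length $2t$ that is not conjugate to a generator is represented by an $(AB)$–word
\[
\gamma = A B^{e_1} A B^{e_2} \cdots A B^{e_t}, \qquad e_i \in \{\pm 1\}.
\]
Such a word is determined up to even cyclic shifts, so conjugacy classes correspond to cyclic sequences (necklaces) $(e_1,\dots,e_t)$. Finitely many short or elliptic classes can be ignored, since they do not affect asymptotics. Parabolic classes are the constant sequences, for instance $(AB)^t = T^{t}$ up to sign conventions; they will be seen to be excluded automatically.

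\textbf{Step 1: the action of $\phi_w$.} Conjugation by $w$ fixes $S=A$ in $\mathrm{PGL}(2,\mathbb Z)$ and sends $T$ to $T^{-1}$. Hence $B=ST$ goes to $ST^{-1}$, and $ST^{-1} = A\,B^{-1}A$. Therefore $\phi_w(\gamma)$ is conjugate (by $A$) to the $(AB)$–word with exponents $(-e_1,\dots,-e_t)$. It follows that $[\gamma]\in\mathscr I$ if and only if there is a rotation $k$ with
\[
e_{i+k} = -e_i \quad \text{for all } i \pmod t .
\]
Let $d \mid t$ be the minimal period of $e$, so that $\gamma$ is primitive if and only if $d=t$. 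Applying the rotation twice gives $e_{i+2k}=e_i$, so $d \mid 2k$. On the other hand $d \nmid k$, because otherwise $e=-e$, which is impossible. Hence $d$ is even, $k\equiv d/2 \pmod d$, and $v_2(d)=v_2(2k)$. In particular, constant (parabolic) sequences are never inert, and inert classes exist only for even $t$. This is the anti-periodicity stratification by the $2$-adic valuation mentioned in the introduction.

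\textbf{Step 2: exact count of primitive inert classes.} Fix $t=2s$. A sequence with $e_{i+s}=-e_i$ is determined by its first $s$ entries, so there are exactly $2^{s}$ such sequences. Among them, the ones that are not primitive have period $d$ a proper divisor of $2s$ with $2s/d$ odd. By Möbius inversion over such divisors, the number of primitive anti-periodic sequences is
\[
2^{s} + O\bigl(s\,2^{s/3}\bigr).
\]
The cyclic group of rotations of $\mathbb Z/2s$ acts freely on primitive sequences, so the number of primitive inert classes of length $4s$ is
\[
\frac{2^{s}+O(s\,2^{s/3})}{2s}.
\]
We must also account for the fact that the $(AB)$-word shifts are by even positions only; this is already built into using cyclic shifts of the exponent sequence. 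Non-primitive inert classes of length $4s$ are powers of primitive inert classes of length $4s/m$ with $m$ odd. Their number is bounded by $\sum_{d\mid 2s,\,d<2s} 2^{d/2} = O(s\,2^{s/3})$, which is negligible.

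\textbf{Step 3: summation and asymptotics.} Classes of length at most $2t$ correspond to $2s\le t$, that is, $s\le m:=\lfloor t/2\rfloor$. Summing $2^{s}/(2s)$ over $s\le m$, the sum is dominated by its top terms:
\[
\sum_{s\le m}\frac{2^{s}}{2s} = \frac{2^{m}}{2m}\sum_{j\ge 0}2^{-j}\bigl(1+O(j/m)\bigr) \sim \frac{2^{m}}{m}\sim\frac{2^{m}}{m+1}.
\]
The error terms from Step 2 sum to $O(m\,2^{m/3})=o(2^m/m)$. This gives part (1). Since the non-primitive contribution is $o$ of the total at every length, part (2) follows as well.

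\textbf{Main obstacle.} The delicate point is Step 1. We must verify carefully that conjugation by $w$, which lies in $\mathrm{PGL}$ rather than $\Gamma$, really corresponds to negating the exponents up to an \emph{even} cyclic shift of the $(AB)$–word. We must also check that $\mathscr I$, defined via $\langle P\rangle$, contains no extra classes beyond those counted (elliptic and parabolic ones). I would settle the first point by an explicit matrix computation of $wBw^{-1}$, and handle the second by noting that only boundedly many non-hyperbolic classes have a given small length.
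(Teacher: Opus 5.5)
Your proof is correct and follows essentially the paper's route. You encode classes by exponent sequences, show that inertness is the anti-periodicity $e_{i+m}=-e_i$ (your minimal-period argument is a cleaner derivation of \lemref{2.1}), count about $2^{s}$ anti-periodic sequences modulo $2s$ rotations with exponentially smaller error, and sum; along the way you replace the divisor-lattice bookkeeping of \thmref{2.7} and \thmref{2.9} by counting primitive classes directly, and Stolz--Ces\`aro by a geometric-series estimate.

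One slip: non-primitive inert classes are powers of primitive inert classes with arbitrary exponent (e.g.\ $(ABAB^{-1})^2$), so their number at length $4s$ is $O(s\,2^{s/2})$ as in \lemref{2.13}, not $O(s\,2^{s/3})$. This is still $o(2^{s}/s)$, so your conclusions stand.
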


\begin{theorem} \label{ambiguous}

\begin{enumerate}
    \item Let $\mathscr A_{\leq 2t}$ be the set of all ambiguous classes of length at most $2t$. Then we have
\[\lvert\,\mathscr A_{\le 2t}\,\rvert\simeq 2^{\frac{t}{2}}.\]
\item The set of all primitive ambiguous classes of length at most $2t$ is asymptotic to the set of all ambiguous classes of length at most $2t$.
\end{enumerate}
\end{theorem}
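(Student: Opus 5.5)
The plan is to turn the ambiguity condition into a symmetry condition on cyclic sign sequences, and then count those sequences by a Burnside-type argument over the dihedral group. I would carry this out in four steps.

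\textbf{Step 1: encoding.} By the result of \cite{combigrowth} quoted above, every class of length $2n$ not conjugate to a generator is represented by an $(AB)$-word
\[
\gamma = AB^{e_1}AB^{e_2}\cdots AB^{e_n}, \qquad e_i\in\{\pm1\}.
\]
Two such words are conjugate iff their sign vectors $(e_1,\dots,e_n)$ differ by a cyclic rotation, since only even cyclic shifts preserve $(AB)$-form. So classes of length $2n$ correspond bijectively to binary necklaces of length $n$.

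The two constant necklaces must be excluded. They give $(AB)^{\pm n}$, and $AB=S\cdot ST$ is parabolic ($S^2T=T$ in $\mathrm{PSL}(2,\mathbb Z)$). Every other necklace gives a hyperbolic class in $\langle \Pi\rangle$.

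\textbf{Step 2: computing the involutions.}
\begin{itemize}
\item \emph{Inversion.} $\phi_R(\gamma)=B^{-e_n}A\cdots B^{-e_1}A$. After a cyclic shift this has sign vector $(-e_n,\dots,-e_1)$.
\item \emph{Reflection.} We have $wSw^{-1}=S$ and $wTw^{-1}=T^{-1}$, so $\phi_w(B)=ST^{-1}=AB^{-1}A$. Hence $\phi_w(AB^{e})=B^{-e}A$, and $\phi_w(\gamma)$ is cyclically the word with signs $(-e_1,\dots,-e_n)$.
\item \emph{Ambiguity.} Composing the two, $\phi_{\mathrm{Amb}}$ sends the necklace $(e_1,\dots,e_n)$ to its reversal $(e_n,\dots,e_1)$.
\end{itemize}
Thus $\mathscr A$ restricted to length $2n$ is in bijection with the non-constant binary necklaces of length $n$ that are equivalent to their own reversal. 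I will call these \emph{self-reversible} necklaces, and write $\mathrm{Nec}(n)$ for the number of all binary necklaces and $\mathrm{Br}(n)$ for the number of binary bracelets of length $n$.

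\textbf{Step 3: exact count by Burnside.} Let the dihedral group $D_n$ act on $\{\pm1\}^n$. A bracelet that is self-reversible is one necklace; any other bracelet is two necklaces. Hence the number of self-reversible necklaces is $2\,\mathrm{Br}(n)-\mathrm{Nec}(n)$.

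Burnside's lemma gives $\mathrm{Br}(n)=\tfrac12\mathrm{Nec}(n)+\tfrac{1}{2n}\sum_{\text{reflections}}\mathrm{Fix}$. The rotation contributions therefore cancel, and
\[
\#\{\text{self-reversible necklaces}\}=\frac1n\sum_{\rho\ \text{reflection}}\mathrm{Fix}(\rho)=
\begin{cases}2^{(n+1)/2}, & n \text{ odd},\\[2pt] \tfrac32\,2^{n/2}, & n\text{ even}.\end{cases}
\]
Subtracting the $2$ constant necklaces gives $|\mathscr A_{=2n}|$ exactly. Summing the geometric series over $n\le t$ yields $C_1 2^{t/2}+C_3\le|\mathscr A_{\le 2t}|\le C_2 2^{t/2}+C_4$, with constants depending on the parity of $t$. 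This proves part (1).

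\textbf{Step 4: primitivity.} A non-primitive class of length $2n$ is $[p^d]$ with $d\ge2$ and $d\mid n$. Its necklace is the $d$-fold repetition of a necklace of length $n/d$. If the long necklace is reversal-invariant, so is the root, because roots in $\mathbb Z_2*\mathbb Z_3$ are unique up to conjugacy. Consequently the number of non-primitive ambiguous classes of length $2n$ is at most
\[
\sum_{d\mid n,\ d\ge2} 2^{n/(2d)+1}=O\!\left(n\,2^{n/4}\right).
\]
Summed over $n\le t$ this is $O(t\,2^{t/4})=o(2^{t/2})$. Since the total is of exact order $2^{t/2}$, the primitive ambiguous classes are asymptotic to all ambiguous classes, which is part (2).

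\textbf{Main obstacle.} I expect the delicate part to be Step 2. One must verify the sign conventions for $\phi_w$ on $B$ carefully in $\mathrm{PGL}(2,\mathbb Z)$, and check that the cyclic re-normalisation to an $(AB)$-word really turns the composition into pure reversal with no leftover negation. If it produced reversal-with-negation instead, the same Burnside count would apply with flipped-sign reflections, and the $2^{n/2}$ order would persist.
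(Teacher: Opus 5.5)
Your proof is correct, and it reaches the theorem by a different route from the paper's.

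The paper, in Lemmas 3.1--3.3, claims that every ambiguous class has a representative whose exponent sequence is a palindrome, $\alpha_i=\alpha_{t+1-i}$. It further claims that each class contains exactly one palindrome (odd $t$) or exactly two (even $t$). From this it gets $|\mathscr A_{2t}|=2^{\lceil t/2\rceil}$ for odd $t$ and $2^{t/2-1}$ for even $t$, and sums.

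You instead identify ambiguous classes with necklaces fixed by some reflection of $D_n$, and count them by Burnside. This gives $2^{(n+1)/2}-2$ for odd $n$ and $3\cdot 2^{n/2-1}-2$ for even $n$.

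For odd $n$ the two counts agree, up to the two parabolic words $(AB^{\pm1})^n$. The paper keeps these and you discard them, which is only an $O(t)$ effect.

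For even $n$ your count is the right one. The palindrome condition only captures reflections through edges and misses reflections through vertices. For example, $ABABABAB^{-1}$ has signs $(+,+,+,-)$, i.e.\ matrix $R^3L$ with $R=AB$, $L=AB^{-1}$. It is ambiguous, since $\phi_{\mathrm{Amb}}$ sends it to $LR^3$, yet none of its rotations is a palindrome. In fact all six binary necklaces of length $4$ are reversal-invariant, while the paper's formula gives $2$. These errors only change counts by bounded factors, so the paper still arrives at the order $2^{t/2}$. Your Burnside computation, however, is what actually gives a correct exact count and hence trustworthy constants in $\simeq$.

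For part (2) the two arguments are parallel. Both bound the non-primitive classes of length $2n$ by primitive ambiguous classes of length $2n/d$ with $d\ge 2$, which yields a bound of polynomial-in-$t$ times $2^{t/4}$. Your explicit remark that the root of an ambiguous proper power is itself ambiguous supplies a step the paper takes for granted. You can see it via uniqueness of roots, or directly: $(\mathrm{rot}_r x)^d=\mathrm{rev}(x)^d$ forces $\mathrm{rot}_r x=\mathrm{rev}(x)$. The paper's identity $|\mathscr A^{np}_{2t}|=\sum_{s\mid t,\ s<t}|\mathscr A^{p}_{2s}|$ in Lemma 3.4 relies on exactly this.
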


Using the asymptotic counts of inert and ambiguous classes together with the results for reciprocal classes from \cite{combigrowth} (see also \cite{dg2025}), we summarize the count in the following table.\\

\begin{table}[h]

\begin{tabular}{|m{7cm}|m{7cm}|}
\hline
\textbf{Conjugacy class sets} & \textbf{Cardinality of the conjugacy classes} \\
\hline
Reciprocal classes of length at most $2t$. 
& $\sim 2^{\lfloor t/2 \rfloor}$ \\
\hline
Inert conjugacy classes of length at most $2t$. 
& $\sim \dfrac{2^{\lfloor t/2 \rfloor}}{\lfloor t/2 \rfloor + 1}$ \\
\hline
Ambiguous conjugacy classes of length at most $2t$. 
& $\simeq 2^{t/2}$ \\
\hline
\end{tabular}
\caption{Growth of respective type of conjugacy classes}
\end{table}

In \textbf{Section 2}, we have studied the structures of the conjugacy classes of the inert elements in the modular group and deduced the asymptotic growth of the classes. Similarly we have studied the ambiguous classes and its asymptotic growth in  \textbf{Section 3}. The study of asymptotic growth of primitive classes of inert and ambiguous classes are also discussed in the respective sections.
\subsection*{Acknowledgment}
The authors thank Fr\'ed\'eric Paulin for his comments related to this work.

Part of this work was completed while Das was visiting the Indian Statistical Institute, Kolkata, and Gongopadhyay was visiting the Institut des Hautes \'Etudes Scientifiques, Paris-Saclay. The authors thank the respective institutes for hospitality and support during the course of this work.

Mishra gratefully acknowledges the partial support provided by the Department of Science and Technology, Government of India, under FIST Grant,  No. SR/FST/MS-I/2019/46,  for computational facilities.

\section{Combinatorial Structure of Inert Classes}  Any word in $W$ of length \(2t\) has the form
\[
p = AB^{\alpha_1} AB^{\alpha_2} \cdots AB^{\alpha_{t}},
\qquad \alpha_i\in\{\pm1\},
\]
where each block \(AB^{\alpha_i}\) contains exactly two letters.

If the conjugacy class \(\{p\}\) is fixed by \(\varphi_w\), then the sequence \((\alpha_1,\ldots,\alpha_{t})\) satisfies an anti-periodicity relation. In particular, there exists an integer \(m\mid t\) such that, after a suitable cyclic rotation, the word \(p\) becomes a repetition of the block
\[
(\alpha_1,\ldots,\alpha_m,-\alpha_1,\ldots,-\alpha_m,\dots)
\]
repeated \(t/m\) times.  Equivalently,
\[
\alpha_{m+i}=-\alpha_i \qquad \text{for all $i$ (indices modulo $2t$)},
\]
a relation that will be proved in the next lemma.

Thus the \(t\)-tuple \((\alpha_1,\ldots,\alpha_{t})\) is constructed from a fundamental
block of length \(2m\), which forces \(2t\) to be divisible by \(4\).  Hence we restrict our
analysis to inert classes with representatives of length \(4t\), denoted \(\mathscr{I}_{4t}\).

We now state the lemma establishing this symmetry.
\begin{lemma}\label{2.1}
Let $\{p\}$ be an inert class in $\mathscr{I}_{4t}$, and suppose
$\{p\}$ contains a cyclically reduced word of the form
\[
p \;=\; AB^{\alpha_1}AB^{\alpha_2}\cdots AB^{\alpha_{2t}}, 
\qquad \alpha_i \in \{\pm 1\}.
\]
If $\{p\}$ is fixed by $\phi_w$, then there exists an integer $m$ with 
$1 \le m \le t$ and $m \mid t$ such that, after a cyclic rotation of $p$, we have
\[
p \;=\;
\big(AB^{\alpha_1}\cdots AB^{\alpha_m}\big)
\big(AB^{-\alpha_1}\cdots AB^{-\alpha_m}\big)
\big(AB^{\alpha_1}\cdots AB^{\alpha_m}\big)
\big(AB^{-\alpha_1}\cdots AB^{-\alpha_m}\big)\cdots\cdots,
\]
that is,
\[
\alpha_{m+i} = -\alpha_i \qquad \text{for all $i$ (indices modulo $2t$)}.
\]
Equivalently, the $2t$--tuple $(\alpha_1,\dots,\alpha_{2t})$ has period $2m$
and the word $p$ is a $(t/m)$--fold repetition of a word of length $2m$.
\end{lemma}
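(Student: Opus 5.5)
First I will work out how $\phi_w$ acts on the generators. Conjugation by $w=\mathrm{diag}(1,-1)$ is the map $z\mapsto -z$ applied on both sides, so $w^{-1}Sw$ is again $z\mapsto -1/z$; hence $\phi_w(A)=A$. For $B=ST$ we compute $w^{-1}(ST)w$. This gives $z\mapsto 1/(-z+1)$, which up to the sign in $\mathrm{PSL}$ should equal $B^{-1}$ or a conjugate of it. I expect the clean statement to be that $\phi_w$ acts on $(AB)$-words by $AB^{\alpha}\mapsto AB^{-\alpha}$, possibly followed by a fixed conjugation that preserves the class. Concretely, $\phi_w(B)=A B^{-1}A$ (or $B^{-1}$ up to conjugation by $A$). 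I will check this with matrices. In either case the class of $\phi_w(p)$ is the class of $\bar p:=AB^{-\alpha_1}\cdots AB^{-\alpha_{2t}}$, the word with every exponent negated, possibly after a cyclic shift by one letter. That shift just turns a $(BA)$-form back into an $(AB)$-form.

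Next, $\bar p$ is again a cyclically reduced $(AB)$-word of length $4t$. Cyclically reduced words of the same length are conjugate exactly when they are cyclic permutations of each other, and since both are $(AB)$-words the permutation is by an even number of letters. So $\phi_w\{p\}=\{p\}$ means there is some $s\in\{0,\dots,2t-1\}$ with
\[
-\alpha_i=\alpha_{i+s}\quad\text{for all } i \pmod{2t}.
\]
We have $s\neq 0$, because otherwise $\alpha_i=-\alpha_i$. Applying the relation twice gives $\alpha_{i+2s}=\alpha_i$, so $2s$ is a period of the $2t$-periodic sequence.

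I now take $d$ to be the minimal period of $(\alpha_i)$, which divides $2t$. Then $d\mid 2s$ but $d\nmid s$, since a shift by $s$ negates every entry. Let $m$ be the least positive integer with $\alpha_{i+m}=-\alpha_i$ for all $i$. The set of shifts $r$ with $\alpha_{i+r}=\pm\alpha_i$ uniformly is a subgroup of $\mathbb Z/2t$, and the sign is a homomorphism from it to $\{\pm1\}$ whose kernel is $d\mathbb Z$. The image is nontrivial, so the kernel has index $2$ and the subgroup is $(d/2)\mathbb Z$. Hence $m=d/2$ and $2m=d$ divides $2t$, giving $m\mid t$ and $1\le m\le t$. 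The relation $\alpha_{m+i}=-\alpha_i$ then says that $p$ is the displayed repetition of the block $(\alpha_1,\dots,\alpha_m,-\alpha_1,\dots,-\alpha_m)$, repeated $t/m$ times. This establishes the statement.

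The main obstacle is the first step: pinning down $\phi_w$ on generators in $\mathrm{PSL}(2,\mathbb Z)$, with the right signs and the extra conjugation by $A$, so that $\phi_w$ of an $(AB)$-word really is conjugate to the exponent-negated word. I will verify this carefully with explicit matrices. If the image turns out to be the reversed-and-negated word, then the symmetry would be palindromic rather than antiperiodic. I expect that case to belong to $\phi_{Amb}$, so the matrix check matters.
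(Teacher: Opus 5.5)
Your proposal is correct. Its first half follows the paper. The paper checks with matrices that $\phi_w(A)=A$ and $\phi_w(B)=AB^{-1}A$; your M\"obius computation $z\mapsto 1/(1-z)$ is exactly $AB^{-1}A$. Hence $\phi_w(AB^{\alpha})=B^{-\alpha}A$ and $\phi_w(p)=B^{-\alpha_1}A\cdots B^{-\alpha_{2t}}A=A\bar p A$. The reversal you worried about does not occur. Since $\phi_w$ is conjugation by $w$, it is a homomorphism, and reversal only enters for $\phi_{Amb}$ through $\phi_R$. Both arguments then use the fact that conjugate cyclically reduced words in a free product are cyclic permutations of each other. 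This gives a shift $s\neq 0$ with $\alpha_{i+s}=-\alpha_i$ modulo $2t$.

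Your last step differs from the paper's, and yours is the sound one. The paper takes whatever shift $m$ its conjugator $L_m$ realizes and argues as follows: $\alpha_{m+i}=-\alpha_i$ makes the sequence a repetition of a block of length $2m$, hence $2m\mid 2t$. That inference fails for an arbitrary shift. For $t=5$, the alternating sequence satisfies $\alpha_{i+3}=-\alpha_i$ modulo $10$, yet $3\nmid 5$.

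Your argument is exactly what guarantees $m\mid t$. You pass to the minimal period $d$ and view the signed periods as a subgroup of $\mathbb{Z}/2t\mathbb{Z}$. The sign map is a homomorphism onto $\{\pm1\}$ with kernel $d\mathbb{Z}/2t\mathbb{Z}$, which forces the subgroup to be generated by $d/2$ and the minimal antiperiod to be $m=d/2$. As a side benefit, the relation then holds for $p$ itself, so no preliminary cyclic rotation is needed.
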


\begin{proof}
Suppose $\{p\}\in \mathscr I_{4t}$ and let
$$
p \;\longleftrightarrow\;
\begin{pmatrix}
a & b\\
c & d
\end{pmatrix},
\qquad ad-bc=1,\quad |a+d|>2.
$$
Then
$$
\phi_w(p)=w^{-1}pw
\;\longleftrightarrow\;
\begin{pmatrix}
a & -b\\
-c & d
\end{pmatrix}.
$$

Moreover,
$$
\phi_w(A)
\;\longleftrightarrow\;
\phi_w\!\left(
\begin{pmatrix}
0 & -1\\
1 & 0
\end{pmatrix}\right)
=
\begin{pmatrix}
0 & 1\\
-1 & 0
\end{pmatrix}
\;\longleftrightarrow\; A,
$$
and
$$
\phi_w(B)
\;\longleftrightarrow\;
\phi_w\!\left(
\begin{pmatrix}
0 & -1\\
1 & 1
\end{pmatrix}\right)
=
\begin{pmatrix}
0 & 1\\
-1 & 1
\end{pmatrix}
\;\longleftrightarrow\; AB^{-1}A.
$$
Consequently,
$$
\phi_w(A)=A
\qquad\text{and}\qquad
\phi_w(B)=AB^{-1}A.
$$
Here $p \in\langle~ P ~\rangle$, since neither $A$ nor $B$ is hyperbolic, p cannot be conjugate to either of them. Moreover, every $\gamma \in W$ that is not conjugate to a generator is conjugate to an $(AB)$-word. Therefore, every conjugacy class of $\mathscr{I}_{4t}$ contains an AB-word.\\ Let $p$ itself be an $(AB)$-word of length 4t, that is $p$ = $AB^{\alpha_1}AB^{\alpha_2}....AB^{\alpha_{2t}}$, where $\alpha_i$'s are $\pm1$. $\phi_w$ is an inner automorphism, therefore $$\phi_w(AB^{\alpha_1}AB^{\alpha_2}\dots AB^{\alpha_{2t}}) = B^{-\alpha_1}AB^{-\alpha_2}\dots AB^{-\alpha_{2t}}A.$$
 Since $\{p\}\in \mathscr{I}_{4t}$, the conjugacy class $\{p\}$ is fixed by the automorphism 
$\phi_{w}$. This means that the words $p$ and $\varphi_{w}(p)$ must be conjugate in the 
group. The word $p$ has the standard form
\[
p = AB^{\alpha_1}\, AB^{\alpha_2}\cdots AB^{\alpha_{2t}},
\]
and applying $\phi_{w}$ sends each block $AB^{\alpha_i}$ to $B^{-\alpha_i}A$. Hence 
$\phi_{w}(p)$ begins with $B$ ends with $A$ and has the structure of a $(BA)$-word, while $p$ begins 
with $A$ ends with $B$ and is an $(AB)$-word. A $(BA)$-word can be conjugate to an $(AB)$-word only after 
an odd cyclic rotation: even rotations preserve the initial $B$, whereas odd rotations 
convert the leading $BA$ into $AB$. Therefore, any conjugating element must implement an odd 
cyclic shift of $\varphi_{w}(p)$.

Consider $L_m = B^{\alpha_m}A\, B^{\alpha_{m-1}}A \cdots B^{\alpha_1}A$. Conjugation by $L_m$ performs exactly the odd cyclic permutation 
required to convert the $(BA)$-pattern of $\varphi_{w}(p)$ back into the $(AB)$-pattern of 
$p$, giving the identity
\[
L_m\,\varphi_{w}(p)\,L_m^{-1} = p,
\]
which shows that $\varphi_{w}(p)$ and $p$ represent the same conjugacy class.

Moreover, writing out the equality $L_m\,\varphi_{w}(p)\,L_m^{-1} = p$ block-by-block yields a 
constraint on the exponents $\alpha_i$, i.e.,
$$ 
(B^{\alpha_m}A....B^{\alpha_2}AB^{\alpha_1})(B^{-\alpha_1}AB^{-\alpha_2}....AB^{-\alpha_{2t}}A)(B^{-\alpha_1}AB^{-\alpha_2}....AB^{-\alpha_m}) = AB^{\alpha_1}AB^{\alpha_2}....AB^{\alpha_{2t}}.
 $$   
 Then, $$\alpha_{m+1} = -\alpha_1,
 ~\alpha_{m+2} = -\alpha_2,
 \ldots, ~\alpha_{m+m} = -\alpha_m,~
 \alpha_{m+(m+1)} = -\alpha_{m+1},~
 \ldots, \alpha_{m} = -\alpha_t.$$
Matching corresponding $(AB)$-blocks on both sides 
forces the sign in the $(m+i)$-th position to be the negative of the sign in the $i$-th 
position. Hence
\[
\alpha_{m+i} = -\alpha_i \qquad \text{for all } i \text{ (indices taken modulo $2t$). }
\]
In particular, the sequence $(\alpha_1,\ldots,\alpha_{2t})$ becomes a repetition of a block of 
length $2m$, so the total length $2t$ must be an integer multiple of $2m$. Therefore, $m \mid t$.
\end{proof}
\begin{cor}\label{2.2}
If, in the lemma above, the conjugacy class $\{p\}$ is primitive (that is,
$p$ is not a proper power), then necessarily $m=t$.  
\end{cor}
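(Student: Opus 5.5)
The plan is to read off primitivity directly from the periodic structure that Lemma~\ref{2.1} provides. By Lemma~\ref{2.1}, after a cyclic rotation (which does not change the conjugacy class, and hence does not affect primitivity), the exponent sequence $(\alpha_1,\dots,\alpha_{2t})$ satisfies $\alpha_{m+i}=-\alpha_i$ for all $i$ modulo $2t$, for some $m$ with $m\mid t$. Applying this relation twice gives $\alpha_{2m+i}=\alpha_i$, so the sequence has period $2m$. Setting
\[
q \;=\; AB^{\alpha_1}\cdots AB^{\alpha_m}\,AB^{-\alpha_1}\cdots AB^{-\alpha_m},
\]
a reduced $(AB)$-word of length $4m$, the rotated representative of $\{p\}$ equals $q^{t/m}$ as an element of $\mathbb{Z}_2*\mathbb{Z}_3$. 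This holds because concatenating $(AB)$-words creates no cancellation: each copy of $q$ ends in $B^{\pm1}$ and the next begins with $A$.

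Now suppose $m<t$. Since $m\mid t$, we get $t/m\ge 2$, so a conjugate of $p$ is the proper power $q^{t/m}$. Then $p=g q^{t/m} g^{-1}=(gqg^{-1})^{t/m}$ is itself a proper power. We also need $q$ to be nontrivial, and in fact hyperbolic, so that this is a genuine proper power in the sense of primitivity. This is immediate: $q$ is a cyclically reduced $(AB)$-word of positive length, hence not conjugate to a generator and of infinite order. This contradicts the primitivity of $\{p\}$, so $m=t$.

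The only point needing care is the meaning of $m$. Lemma~\ref{2.1} asserts the existence of some $m$, and a single sequence might satisfy the anti-periodicity relation for several values of $m$. The corollary should be read as saying that, in the primitive case, the only admissible $m$ is $t$; the argument above shows exactly that any admissible $m<t$ yields a proper power. I would also note that $m=t$ is consistent: it gives $\alpha_{t+i}=-\alpha_i$, so $p=q$ with $q$ of length $4t$, and no contradiction arises. I expect no real obstacle beyond checking that the identification of the rotated word with $q^{t/m}$ holds in the group, which reduces to the no-cancellation remark for $(AB)$-words.
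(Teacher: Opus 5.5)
Your argument is correct and is essentially the paper's. The paper gives no separate proof: the corollary is read off from the final clause of Lemma~\ref{2.1}, namely that the exponent sequence has period $2m$, so a conjugate of $p$ is the $(t/m)$-fold power of a word of length $4m$, and primitivity then forces $t/m=1$.

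One small slip does not affect the conclusion. A cyclically reduced $(AB)$-word need not be hyperbolic; for example, $AB$ is the parabolic element $T$. You do not need hyperbolicity of $q$, however, since $p=(gqg^{-1})^{t/m}$ with $t/m\ge 2$ already makes $p$ a proper power. In any case your $q$ has exponents of both signs.
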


Now, our aim is to count words in the group $\mathbb Z_2 * \mathbb Z_3$.
To this end, we encode $(AB)$--words by binary sequences.
An $(AB)$--word of length $4t$, $$ AB^{\alpha_1}AB^{\alpha_2}\cdots AB^{\alpha_{2t}},\qquad \alpha_i\in\{\pm1\},$$is identified with the binary sequence $(\alpha_1,\ldots,\alpha_{2t})$. Let $X_t$ denote the collection of all binary sequences of length $t$.

 Define, 
 $$\mathcal{T}_{4t}=\bigcup_{m \lvert t}
 \{ AB^{\alpha_1}AB^{\alpha_2}....AB^{\alpha_{2t}} \mid \alpha_{m+i} = -\alpha_i, \; \alpha_i\in\{\pm1\} \text{ for all $i$ (indices modulo $2t$) }\}.$$
 Identifying $\mathcal{T}_{4t}$ with 
$$ T_{2t} =\bigcup_{m|t}\{(\alpha_1,\alpha_2,....,\alpha_{2t}) \mid  \alpha_{m+i}=-\alpha_i, \alpha_i\in\{\pm1\}\text{ for all $i$ (indices modulo $2t$) }\} \subset X_{2t}. $$  Write $T_{2t} = \bigcup_{m\lvert t}T_{2t,m}$,   where 
$$ T_{2t,m} = \{(\alpha_1,\alpha_2,....,\alpha_{2t}) \mid  \alpha_{m+i}=-\alpha_i, \alpha_i\in\{\pm1\}\;\text{ for all $i$ (indices modulo $2t$) }\}.$$ 

Conjugate words of an $(AB)$-word, when expressed in the same $(AB)$-form, correspond to cyclic permutations of even order. Keeping this in mind, and using the established bijection, we define a cyclic action on $X_{2t}$ as follows:
\[
\delta^k(\alpha_1, \alpha_2, \ldots, \alpha_{2t}) 
= (\alpha'_1, \alpha'_2, \ldots, \alpha'_{2t}),
\]
where $\alpha'_i = \alpha_{i-k}$ for all $i \in\{ 1, \ldots, 2t\}$. 
Here, the indices are taken modulo $2t$. 
\begin{lemma}
 For fix $k$, and any $(\alpha_1,\alpha_2,....,\alpha_{2t}) \in X_{2t}$ we observe that $$\delta^k(\alpha_1,\alpha_2,....,\alpha_{2t}) \in T_{2t} \hbox{ if and only if }$$ there exists $m$, where $m$ divides $t$, such that $\alpha_{i-k} = -\alpha_{m+i-k}$ for all $i$ (indices modulo $2t$). 

\end{lemma}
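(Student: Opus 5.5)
The lemma is a direct unwinding of the definitions of $\delta^k$ and of $T_{2t}=\bigcup_{m\mid t}T_{2t,m}$. I would prove both implications at once by a chain of equivalences. Membership in $T_{2t}$ is decided block-size by block-size, so it suffices to check, for a single fixed divisor $m$ of $t$, that
\[
\delta^k(\alpha_1,\dots,\alpha_{2t})\in T_{2t,m}
\quad\Longleftrightarrow\quad
\alpha_{i-k}=-\alpha_{m+i-k}\ \text{for all } i \pmod{2t}.
\]
Taking the union over $m\mid t$ then gives the statement.

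\textbf{The single-$m$ equivalence.} Put $(\alpha'_1,\dots,\alpha'_{2t})=\delta^k(\alpha_1,\dots,\alpha_{2t})$, so that $\alpha'_j=\alpha_{j-k}$ for every $j$, with indices modulo $2t$. This rule holds for every residue $j$, not just $j\in\{1,\dots,2t\}$, since both sides are $2t$-periodic in $j$.

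\begin{itemize}
\item By the definition of $T_{2t,m}$, the shifted tuple lies in $T_{2t,m}$ exactly when $\alpha'_{m+i}=-\alpha'_i$ for all $i$.
\item Substituting $\alpha'_{m+i}=\alpha_{m+i-k}$ and $\alpha'_i=\alpha_{i-k}$ turns this into $\alpha_{m+i-k}=-\alpha_{i-k}$ for all $i$.
\item Since $\alpha\in\{\pm1\}$, this is the same as $\alpha_{i-k}=-\alpha_{m+i-k}$, which is the displayed condition.
\end{itemize}

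\textbf{Bookkeeping and a closing remark.} The only point needing care is the indexing. The substitution is legitimate because $m+i$ is read modulo $2t$ both in the definition of $T_{2t,m}$ and in the definition of $\delta^k$, so no boundary cases arise. As $i$ ranges over all residues mod $2t$, so does $i-k$. The condition is therefore equivalent to $\alpha_{m+j}=-\alpha_j$ for all $j$, i.e.\ to $(\alpha_1,\dots,\alpha_{2t})\in T_{2t,m}$ itself. I would record this as a remark after the proof: $T_{2t}$ is invariant under the cyclic action $\delta$. This invariance is what the subsequent orbit counting needs.

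I do not expect a genuine obstacle here. The main risk is an off-by-sign error in the shift direction ($i-k$ versus $i+k$), which I would avoid by fixing the convention $\alpha'_i=\alpha_{i-k}$ exactly as the paper defines it.
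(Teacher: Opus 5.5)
Your proposal is correct and matches the paper's argument: set $\alpha'=\delta^k(\alpha)$, write membership in $T_{2t}$ as the existence of some $m\mid t$ with $\alpha'_{m+i}=-\alpha'_i$, and substitute $\alpha'_j=\alpha_{j-k}$. Your closing reindexing remark, that the condition is equivalent to $\alpha\in T_{2t,m}$ and hence $T_{2t}$ is $\delta$-invariant, is exactly the paper's next lemma; the appeal to $\alpha\in\{\pm1\}$ to swap the sides of $a=-b$ is unnecessary but harmless.
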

\begin{proof}
    Take $$\delta^k(\alpha_1,\alpha_2,....,\alpha_{2t}) = (\alpha'_1,\alpha'_2,....,\alpha'_{2t})$$
    $(\alpha'_1,\alpha'_2,....,\alpha'_{2t}) \in T_{2t}$ if and only if there exists $m$, where $m$ divides $t$, such that $\alpha'_i=-\alpha'_{m+i}$. Now, using the definition of the cyclic action in this, we get $\alpha_{i-k} = -\alpha_{m+i-k}$ where  $m\; \lvert\;t$. 
\end{proof}
 \begin{lemma}
     If $(\alpha_1,\alpha_2,....,\alpha_{2t}) \in T_{2t}$, then $\delta^k(\alpha_1,\alpha_2,....,\alpha_{2t}) \in T_{2t}$ for all $k$.
 \end{lemma}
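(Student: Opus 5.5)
The plan is to verify directly that the defining anti-periodicity relation of each $T_{2t,m}$ is invariant under the cyclic action $\delta$. Since $T_{2t}=\bigcup_{m\mid t}T_{2t,m}$, it suffices to show that each $T_{2t,m}$ is $\delta$-stable. So fix $(\alpha_1,\dots,\alpha_{2t})\in T_{2t}$ and choose $m\mid t$ with $\alpha_{m+i}=-\alpha_i$ for all $i$, indices modulo $2t$. We will show that $\delta^k(\alpha_1,\dots,\alpha_{2t})\in T_{2t,m}$ with the \emph{same} $m$, for every integer $k$.

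Write $\delta^k(\alpha_1,\dots,\alpha_{2t})=(\alpha'_1,\dots,\alpha'_{2t})$ with $\alpha'_i=\alpha_{i-k}$. By the preceding lemma, membership in $T_{2t}$ is equivalent to the existence of $m\mid t$ with $\alpha_{i-k}=-\alpha_{m+i-k}$ for all $i$. We verify this directly. For any $i$, set $j=i-k \pmod{2t}$. Then
\[
\alpha'_{m+i}=\alpha_{m+i-k}=\alpha_{m+j}=-\alpha_j=-\alpha_{i-k}=-\alpha'_i .
\]
The key point is that the hypothesis $\alpha_{m+j}=-\alpha_j$ holds for \emph{all} residues $j$ modulo $2t$. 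As $i$ ranges over $\mathbb{Z}/2t$, so does $j$, so the relation transfers without exception.

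The only step needing care is that the indices are consistently read modulo $2t$. The relation $\alpha_{m+i}=-\alpha_i$ must be compatible with the cyclic structure: iterating it gives $\alpha_{i+2m}=\alpha_i$, and applying it $2t/m$ times returns to $\alpha_i$ with sign $(-1)^{2t/m}=+1$ (as $2t/m$ is even since $m\mid t$). This compatibility is exactly why the shift by $k$ commutes with the translation by $m$ on $\mathbb{Z}/2t$. Both are translations of the cyclic group $\mathbb{Z}/2t$, and translations commute. I expect no real obstacle beyond this bookkeeping.

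Finally, since $m$ is unchanged and $m\mid t$, we get $\delta^k(\alpha)\in T_{2t,m}\subseteq T_{2t}$ for every $k$. It follows that $T_{2t}$ is a union of $\delta$-orbits, which is what the subsequent counting of conjugacy classes as orbits will use.
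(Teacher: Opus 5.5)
Your proof is correct and is essentially the paper's argument: keep the same $m$ and compute $\alpha'_{m+i}=\alpha_{m+i-k}=-\alpha_{i-k}=-\alpha'_i$, using that the relation $\alpha_{m+j}=-\alpha_j$ holds for every residue $j$ modulo $2t$. The parity remark $(-1)^{2t/m}=1$ is not needed for this step: translations of $\mathbb{Z}/2t$ commute regardless, and that parity fact only guarantees that the defining relation of $T_{2t,m}$ is consistent. It does no harm, though.
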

\begin{proof}
    Since $(\alpha_1,\alpha_2,....,\alpha_{2t}) \in T_{2t}$, it follows that there exists $m$ such that $\alpha_{m+i}=-\alpha_{i}$ where $m$ divides $t$. Now,
    $$\delta^k(\alpha_1,\alpha_2,....,\alpha_{2t}) = (\alpha'_1,\alpha'_2,....,\alpha'_{2t}) \text{, where }\alpha'_{i}=\alpha_{i-k}.$$ That implies, $$ \alpha'_{i} = \alpha_{i-k} = -\alpha_{m+i-k}=-\alpha'_{m+i}.$$
    Hence, $\alpha'_i=-\alpha'_{m+i}, \text{ where } m\;\lvert\;t .$
\end{proof}

We can see that there is a bijection between $\mathcal{T}_{4t}$ and $T_{2t}$ given by $$ \underbrace{\scriptstyle AB^{\alpha_1}AB^{\alpha_2}...AB^{\alpha_m}AB^{-\alpha_1}AB^{-\alpha_2}...AB^{-\alpha_m}.......AB^{\alpha_1}AB^{\alpha_2}...AB^{\alpha_m}AB^{-\alpha_1}AB^{-\alpha_2}...AB^{-\alpha_m}}_{4t \;\text{length}} $$$$\mapsto  \underbrace{\scriptstyle (\alpha_1, \alpha_2,...,\alpha_m,-\alpha_1,-\alpha_2,...,-\alpha_m,.......,\alpha_1,\alpha_2,...,\alpha_m,-\alpha_1,-\alpha_2,...,-\alpha_m)}_{2t \;\text{length}}.$$
We consider the cyclic permutation $\delta$ restricting it to $T_{2t}$, i.e. $\delta: T_{2t} \xrightarrow{} T_{2t}$, given by $(\alpha_1,\alpha_2,....,\alpha_{2t}) \mapsto (\alpha_{2t},\alpha_1,....,\alpha_{2t-1}).$ Thus, the group generated by $\delta$ is cyclic of order $2t$. First, we will compute the cardinality of $T_{2t}$, that is, $\lvert\;T_{2t}\; \rvert$. Once we have that, we can immediately obtain the cardinality of $\mathcal{T}_{4t}$, since there is a bijection between them. \\\\Our goal is to analyze the growth rate of inert classes in $\Gamma$, where inert classes in $ \Gamma$ is identified with the quotient $T_{2t}/\langle\delta\rangle$. Take the prime factorization of $t$. Suppose that the prime factorization of $t$ is  $$t = 2^ap_1^{s_1}p_2^{s_2}.....p_n^{s_n},$$ where $ p_j$'s are distinct odd primes and $ a, s_j \in \mathbb{Z}^+ \cup \{0\}\; \forall\; j$, then every divisor $m$ of $t$ can be written in the form $$ m = 2^lp_1^{k_1}p_2^{k_2}.....p_n^{k_n},$$ where $0\leq l\leq a \text{ and } 0\leq k_j\leq s_j ,\;k_j  \in \mathbb{Z} \;\;\forall\;j \in \{1,...,n\}.$ Suppose that $x=(\alpha_1,\alpha_2,....,\alpha_{2t}) \in T_{2t,m}$, then $\alpha_{m+i}=-\alpha_i$ for all $i$. Hence, $$\alpha_{2m+i}=\alpha_{m+m+i}=-\alpha_{m+i}=\alpha_i, \;\alpha_{3m+i}=\alpha_{m+m+m+i}=-\alpha_{m+m+i}=\alpha_{m+i}=-\alpha_i.$$ Therefore, we see that $$\alpha_{bm+i}=(-1)^b\alpha_i \;\;\;\;\forall\; i, b \in \mathbb{Z}.$$ 
\begin{lemma}\label{2.5}
$\hangindent=2em$
\begin{itemize}
    \item[(1)] For any $l_1, l_2$ with $l_1 \ne l_2$ and $0 \le l_1, l_2 \le a$, we have
    \[
        T_{2t,2^{l_1}} \cap T_{2t,2^{l_2}} = \emptyset.
    \]

    \item[(2)] For any $l$ and $j$ with $0 \le l \le a$ and $k_j \in \{1,\ldots,s_j\}$ for all $j \in \{1,\ldots,n\}$, we have
    \[
        T_{2t,2^l} \cap T_{2t,(p_j^{k_j})} = \emptyset.
    \]

    \item[(3)] For any $j_1, j_2 \in \{1,2,\ldots,n\}$ with $j_1 \ne j_2$, we have
    \[
        T_{2t,(p_{j_1}^{k_{j_1}})} \cap T_{2t,(p_{j_2}^{k_{j_2}})}
        = \{ (\alpha_1,\alpha_2,\ldots,\alpha_{2t}) \mid \alpha_{i+1} = -\alpha_i,\; \alpha_i \in \{-1,1\} \}.
    \]

    \item[(4)] For each $j \in \{1,2,\ldots,n\}$ and all $k_j, k'_j\in \{0,1,\ldots,s_j\}$ where $k_j \leq k'_j$ , we have
    \[
        T_{2t,(p_j^{k_j})} \subseteq T_{2t,(p_j^{k'_j})}.
    \]
\end{itemize}
\end{lemma}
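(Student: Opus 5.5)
The whole argument rests on the identity $\alpha_{bm+i}=(-1)^b\alpha_i$ for all $i,b\in\mathbb{Z}$, valid for $x\in T_{2t,m}$, which was derived just before the statement. Negative $b$ are allowed: since $m\mid t$, the number $2t/m$ is even, so the relation $\alpha_{m+i}=-\alpha_i$ is consistent with indices taken modulo $2t$, and the sequence has period $2m$ dividing $2t$. Each of the four parts then comes from writing a suitable integer as a multiple of one admissible $m$ in two different ways and comparing the two signs $(-1)^b$.

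For (1), take $l_1<l_2$ and suppose $x\in T_{2t,2^{l_1}}\cap T_{2t,2^{l_2}}$. Writing $2^{l_2}=2^{l_2-l_1}\cdot 2^{l_1}$ with an even multiplier, membership in $T_{2t,2^{l_1}}$ gives $\alpha_{2^{l_2}+i}=\alpha_i$. Membership in $T_{2t,2^{l_2}}$ gives $\alpha_{2^{l_2}+i}=-\alpha_i$. These contradict each other since $\alpha_i=\pm1$.

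For (2), I compute $\alpha_{2^lp_j^{k_j}+i}$ in two ways. Viewing the shift as $p_j^{k_j}$ copies of $2^l$, it equals $(-1)^{p_j^{k_j}}\alpha_i=-\alpha_i$. Viewing it as $2^l$ copies of $p_j^{k_j}$, it equals $(-1)^{2^l}\alpha_i$. This is a contradiction exactly when $l\ge1$. When $l=0$, the set $T_{2t,1}$ is the set of alternating sequences, and it is contained in every $T_{2t,p_j^{k_j}}$ because $p_j^{k_j}$ is odd. So the case $l=0$ has to be excluded, and I will prove (2) for $1\le l\le a$. This reading is consistent with (3) and with (4) at $k_j=0$, both of which identify $T_{2t,1}$ with the alternating sequences. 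The point needs care: the distinguished role of $l=0$ should feed correctly into the later inclusion–exclusion count.

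For (3), the inclusion $\supseteq$ holds because an alternating sequence satisfies $\alpha_{m+i}=(-1)^m\alpha_i=-\alpha_i$ for every odd $m$. For $\subseteq$, set $m_1=p_{j_1}^{k_{j_1}}$ and $m_2=p_{j_2}^{k_{j_2}}$. These are odd and coprime, so Bézout gives integers $u,v$ with $um_1+vm_2=1$. Reducing modulo $2$ gives $u+v\equiv1 \pmod 2$. Applying the identity first with $m_1$ and then with $m_2$ yields
\[
\alpha_{1+i}=(-1)^{u}(-1)^{v}\alpha_i=-\alpha_i .
\]
Here the possibly negative $u,v$ are justified by the periodicity remark above.

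For (4), write $p_j^{k'_j}=p_j^{k'_j-k_j}\cdot p_j^{k_j}$, where the multiplier is odd. If $x\in T_{2t,p_j^{k_j}}$, then $\alpha_{p_j^{k'_j}+i}=(-1)^{p_j^{k'_j-k_j}}\alpha_i=-\alpha_i$, so $x\in T_{2t,p_j^{k'_j}}$. The only real obstacle in the whole proof is the bookkeeping with indices modulo $2t$ and the exceptional case $l=0$ in (2); everything else is a parity check.
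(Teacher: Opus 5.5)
Your proof is correct and follows the paper's argument part by part. The paper uses the identity $\alpha_{bm+i}=(-1)^b\alpha_i$ with an even multiplier in (1), two evaluations of $\alpha_{2^lp_j^{k_j}+i}$ in (2), B\'ezout with $c+d$ odd in (3), and an odd multiplier in (4). Your check of $\supseteq$ in (3) supplies a direction the paper leaves implicit.

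Your exclusion of $l=0$ in (2) is also justified. The paper's step $(-1)^{2^l}\alpha_i=\alpha_i$ fails at $l=0$, and in fact $T_{2t,1}\subseteq T_{2t,p_j^{k_j}}$, as parts (3), (4) and the paper's own example $T_{6,1}\subset T_{6,3}$ show. So (2) is false as stated for $l=0$, and the count in \thmref{2.7} relies on your corrected version, namely $T_{2t,1}\subseteq T_{2t,p_1^{s_1}\cdots p_n^{s_n}}$.
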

\begin{proof}
\begin {enumerate}
\item Suppose that there exist $x = (\alpha_1,\alpha_2,....,\alpha_{2t}) \in T_{2t, 2^{l_1}} \cap T_{2t, 2^{l_2}}$, then $\alpha_{2^{l_!}+i}=-\alpha_i$ and $\alpha_{2^{l_2}+i}=-\alpha_i$ for all $i$. Take $l_1 >l_2$, so we can write $l_1 = l_2+h $ for some $h \in \mathbb{Z}^+$. Now $\alpha_{bm +i}=(-1)^b \alpha_i$ where $b \in \mathbb{Z}^+$ for all $i$, therefore $\alpha_{2^h 2^{l_2} +i} =(-1)^{2^h}\alpha_i=\alpha_i$, which implies $\alpha_{2^{l_1}+i}=\alpha_i$, a contradiction.

\item Fix $l, j$ and suppose that there exists $x = (\alpha_1,\alpha_2,....,\alpha_{2t}) \in T_{2t, 2^{l}} \cap T_{2t, (p_j^{k_j})}$, then $\alpha_{2^l + i}=-\alpha_i$ and $\alpha_{({p_j}^{k_j})+i}=-\alpha_i$ for all $i$. Since $\alpha_{bm+i}=(-1)^b \alpha_i$  for all $i$ implies $\alpha_{({p_j}^{k_j})\cdot2^l+i}=(-1)^{({p_j}^{k_j})}\alpha_i=-\alpha_i$ and $\alpha_{2^l \cdot ({p_j}^{k_j})+i}=(-1)^{2^l}\alpha_i=\alpha_i$, which gives $\alpha_i=-\alpha_i$ for all $i$, a contradiction.
\item Take $ {p_{j_1}}^{k_{j_1}} = p$ and ${p_{j_2}}^{k_{j_2}}=q$, both are odd numbers and $\gcd(p,q)=1$, so there are some integers $c, d$ such that $cp+dq=1$. Suppose that $x = (\alpha_1,\alpha_2,....,\alpha_{2t}) \in T_{2t,p} \cap T_{2t, q}$, then $\alpha_{p+i}=-\alpha_i$ and $\alpha_{q+i}=-\alpha_i$ for all $i$. Using $\alpha_{bm+i}=(-1)^b\alpha_i$, we get $$ \alpha_{cp+i}=(-1)^c\alpha_i \imp \alpha_{cp+dq+i}=(-1)^c\alpha_{dq+i} =(-1)^{c+d}\alpha_i \imp \alpha_{1+i}=(-1)^{c+d}\alpha_i$$for all  $i$.
    Now, we know that for any odd number $y$, $y \equiv 1 \pmod 2$. Since $p$ and $q$ are both odd numbers, we have$$ p \equiv 1\pmod 2 \text{ \;\;and \;\;} q \equiv 1\pmod 2. $$ 
    $$ \text{ This implies,\; } cp+dq \equiv c+d\pmod2 \imp 1 \equiv c+d \pmod2. $$Therefore, $c+d$ is an odd integer. Hence, we have $\alpha_{i+1}=-\alpha_i$ for all $i$.
\item First we fix $j$, now let $x= (\alpha_1,\alpha_2,....,\alpha_{2t})\in T_{2t, (p_j^{k_j})}$, then $\alpha_{(p_j^{k_j})+i}=-\alpha_i$ for all $i$. Since $k_j \in \mathbb{Z}$ and $0 \leq k_j \leq k'_j\leq s_j$, there are some $h \in \mathbb{Z}$ such that $k_j + h = k'_j$. We know that $\alpha_{bm +i}=(-1)^b\alpha_i$ for all $i$ and $b\in \z $, therefore $$ \alpha_{(p_j^h)(p_j^{k_j})+i}=(-1)^{p_j^h}\alpha_i=-\alpha_i, \text{ that is, \;}  \alpha_{(p_j^{k'_j})+i}=-\alpha_i  \text{\; for all $i$ }.$$ 
\end{enumerate}
This means $x \in T_{2t, (p_j^{k'_j})}$. 
\end{proof}

\begin{lemma}\label{2.6}
    Let $$M=\{ \;\prod_{j=1}^{n}p_j^{k_j} \mid \;k_j \in\{0, 1,...., s_j\} \;\;\forall\; j \;\}.$$ Then, for each $y \in M$ and $l_1,\,l_2 \in \{0, 1,2,..., a\},$ 
\begin{enumerate}
\item $T_{2t,y} \subseteq T_{2t,p_1^{s_1}p_2^{s_2}....p_n^{s_n}}.$
\item $T_{2t,2^{l_1}y} \subseteq T_{2t,2^{l_1}(p_1^{s_1}p_2^{s_2}....p_n^{s_n})}.$
\item for any $l_1 \neq l_2 \in \{1, 2,..., a\}$ and any $y, z \in M,$
     $$T_{2t,(2^{l_1})y} \cap T_{2t,(2^{l_2})z} = \emptyset.$$ 

\end{enumerate}
\end{lemma}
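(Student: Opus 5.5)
The plan is to use the multiplicativity identity $\alpha_{bm+i}=(-1)^b\alpha_i$ established above for $x\in T_{2t,m}$, together with the observation that the set $T_{2t,m}$ grows when $m$ is multiplied by an odd factor.

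\textbf{Key inclusion.} If $x\in T_{2t,m}$ and $r$ is odd, then
\[
\alpha_{rm+i}=(-1)^r\alpha_i=-\alpha_i
\]
for all $i$. Hence $T_{2t,m}\subseteq T_{2t,rm}$; this is the same computation as in Lemma~\ref{2.5}(4).

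\textbf{Part (1).} For $y=\prod p_j^{k_j}\in M$, put $r=\prod p_j^{s_j-k_j}$. Then $r$ is odd and $ry=p_1^{s_1}\cdots p_n^{s_n}$, so the key inclusion gives $T_{2t,y}\subseteq T_{2t,p_1^{s_1}\cdots p_n^{s_n}}$.

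\textbf{Part (2).} Use the same odd $r$ with $m=2^{l_1}y$. Since $r\cdot 2^{l_1}y=2^{l_1}p_1^{s_1}\cdots p_n^{s_n}$, this product still divides $t$, so the index set is legitimate. The key inclusion then gives (2).

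\textbf{Part (3).} Assume $l_1>l_2$ and suppose $x\in T_{2t,2^{l_1}y}\cap T_{2t,2^{l_2}z}$. By (2) we may replace $y$ and $z$ by $Q=p_1^{s_1}\cdots p_n^{s_n}$. So $x$ is anti-periodic with shift $2^{l_1}Q$ and with shift $2^{l_2}Q$.

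Write $2^{l_1}Q=2^{h}\cdot(2^{l_2}Q)$ with $h=l_1-l_2\ge 1$. Applying the identity with $m=2^{l_2}Q$ and $b=2^h$, which is even, gives
\[
\alpha_{2^{l_1}Q+i}=\alpha_i .
\]
Anti-periodicity with shift $2^{l_1}Q$ gives $\alpha_{2^{l_1}Q+i}=-\alpha_i$. Together these force $\alpha_i=-\alpha_i$, which is impossible since $\alpha_i=\pm1$. Hence the intersection is empty, mirroring Lemma~\ref{2.5}(1).

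\textbf{Main obstacle.} The only delicate point is the reduction to $Q$ in (3). Alternatively, one can argue directly with $\gcd$: let $g=\gcd(2^{l_1}y,2^{l_2}z)=2^{l_2}g'$ with $g'$ odd. A Bézout combination $c\,2^{l_1}y+d\,2^{l_2}z=g$ yields $\alpha_{g+i}=(-1)^{c+d}\alpha_i$. A parity check on $c$ and $d$ then gives a contradiction with the relation $\alpha_{2^{l_1}y+i}=-\alpha_i$. I would use the reduction via (2), since it avoids this parity bookkeeping.
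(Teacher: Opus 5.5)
Your proposal is correct and follows essentially the same route as the paper. In (1) and (2) you multiply the anti-period by the odd factor $\prod_j p_j^{s_j-k_j}$ and use $\alpha_{bm+i}=(-1)^b\alpha_i$. In (3) you push both anti-periods up to $2^{l_1}Q$ and $2^{l_2}Q$, then multiply the smaller one by the even factor $2^{l_1-l_2}$ to force $\alpha_i=-\alpha_i$, which is exactly the paper's argument.
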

\begin{proof}
\begin{enumerate}
    \item Assume that $x =(\alpha_1,\alpha_2,....,\alpha_{2t})\in T_{2t,y}$ for some fixed $y \in M$, implies $\alpha_{y+i}=-\alpha_i$ for all $i$, take $y=p_1^{k_1}p_2^{k_2}....p_n^{k_n}$ for some fixed $k_j$'s. Since $k_j \in\{0,1,..., s_j\}$ for all j, there exists $h_j \in \mathbb{Z}$ such that $k_j +h_j =s_j $ for all $j$. Now, using $\alpha_{bm +i}=-\alpha_i$ here, we get
        $$ \alpha_{(p_1^{h_1}p_2^{h_2}....p_n^{h_n})\cdot y + i} =(-1)^{(p_1^{h_1}p_2^{h_2}....p_n^{h_n})}\alpha_i=-\alpha_i \imp \alpha_{(p_1^{s_1}p_2^{s_2}....p_n^{s_n}) + i} = -\alpha_i \;\;\text{ for all}\;i.$$ Hence, $x \in T_{2t,p_1^{s_1}p_2^{s_2}....p_n^{s_n}}.$

    \item Fix $l_1$, and let $x =(\alpha_1,\alpha_2,....,\alpha_{2t}) \in T_{2t, 2^{l_1}y}$ be for some fixed $y \in M$. It follows that $\alpha_{2^{l_1}y+i}=-\alpha_i$ for all $i$. Take $y=p_1^{k_1}p_2^{k_2}....p_n^{k_n}$ with fixed integers $k_j's$. As $k_j \in\{0,1,..., s_j\}$ for each $j$, we can find $h_j \in \mathbb{Z}$ satisfying $k_j +h_j =s_j $ for all $j$. Now,
    $$\alpha_{(p_1^{h_1}p_2^{h_2}....p_n^{h_n})\cdot 2^{l_1}y + i} =(-1)^{(p_1^{h_1}p_2^{h_2}....p_n^{h_n})}\alpha_i=-\alpha_i \imp \alpha_{2^{l_1}(p_1^{s_1}p_2^{s_2}....p_n^{s_n})} = -\alpha_i \;\;\forall\;i.$$ 
    Hence, $x \in T_{2t,2^{l_1}p_1^{s_1}p_2^{s_2}....p_n^{s_n}}.$

    \item Take $l_1>l_2$, so there exists some $q \in \mathbb{Z} $ such that $ l_1=l_2+q$. Suppose that $x=(\alpha_1,\alpha_2,....,\alpha_{2t}) \in T_{2t,(2^{l_1})y} \cap T_{2t,(2^{l_2})z}$, then $\alpha_{(2^{l_1})y +i }= -\alpha_i$ and $\alpha_{(2^{l_2})z + i}=-\alpha_i$ for all $i$. Take $y=p_1^{k_1}p_2^{k_2}....p_n^{k_n}$ and $z=p_1^{k'_1}p_2^{k'_2}....p_n^{k'_n}$ where each $k_j, k'_j \in \{0,1,.., s_j\}$ for all $j$, so we can find $h_j,\, h'_j \in \mathbb{Z}$ that satisfies $k_j + h_j=s_j$ and $k'_j+h'_j=s_j$ for each $j$. Therefore,
    \begin{align*}\alpha_{(p_1^{h_1}p_2^{h_2}....p_n^{h_n})\cdot (2^{l_1})y + i} &=(-1)^{(p_1^{h_1}p_2^{h_2}....p_n^{h_n})}\alpha_i\\
    \imp \alpha_{(2^{l_1})(p_1^{s_1}p_2^{s_2}....p_n^{s_n})} &= -\alpha_i \;\;\forall\;i.
    \end{align*}And also,
    \begin{align*}
    \alpha_{(p_1^{h'_1}p_2^{h'_2}....p_n^{h'_n})\cdot (2^l_{2})z + i} & =(-1)^{(p_1^{h'_1}p_2^{h'_2}....p_n^{h'_n})}\alpha_i \\
    \imp\; \alpha_{(2^{l_2})(p_1^{s_1}p_2^{s_2}....p_n^{s_n})} &= -\alpha_i \\
    \imp\; \alpha_{2^q\cdot2^{l_2}(p_1^{s_1}p_2^{s_2}....p_n^{s_n})}&=(-)^{2^q}\alpha_i=\alpha_i\;\;\forall\;i.\\
    \imp\; \alpha_{(2^{l_1})(p_1^{s_1}p_2^{s_2}....p_n^{s_n})} &= \alpha_i\;\;\forall\;i, \text{ which gives a contradiction. }
    \end{align*} \end{enumerate}
    Hence, $T_{2t,(2^{l_1})y} \cap T_{2t,(2^{l_2})z} = \emptyset.$ 
\end{proof}

\begin{theorem}\label{2.7}
Suppose that the prime factorization of $t$ is $ t=2^ap_1^{s_1}p_2^{s_2}....p_n^{s_n} $ where $p_j's$ are distinct odd primes and $a, s_j \in \mathbb{Z}^+ \cup \{0\} \;\;\forall \; j.$ Then $$\lvert\;\mathcal{T}_{4t}\;\rvert=\sum_{l=0}^{a}2^{2^lp_1^{s_1}p_2^{s_2}....p_n^{s_n}}.$$
\end{theorem}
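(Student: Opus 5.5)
Via the bijection $\mathcal{T}_{4t}\leftrightarrow T_{2t}$, it suffices to show $|T_{2t}|=\sum_{l=0}^{a}2^{2^l P}$, where $P:=p_1^{s_1}p_2^{s_2}\cdots p_n^{s_n}$ is the odd part of $t$. The plan is to collapse the union $T_{2t}=\bigcup_{m\mid t}T_{2t,m}$ onto the $a+1$ "maximal" pieces $T_{2t,2^lP}$, show these are pairwise disjoint, and count each one directly.

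\textbf{Step 1: reduction to maximal pieces.} Every divisor of $t$ has the form $m=2^l y$ with $0\le l\le a$ and $y\in M$. Lemma~\ref{2.6}(2) gives $T_{2t,2^l y}\subseteq T_{2t,2^lP}$; the argument only uses $\alpha_{bm+i}=(-1)^b\alpha_i$ with $b=P/y$ odd. Conversely, each $T_{2t,2^lP}$ is itself one of the sets in the union, since $2^lP\mid t$. Hence
\[
T_{2t}=\bigcup_{l=0}^{a}T_{2t,2^lP}.
\]

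\textbf{Step 2: disjointness.} Lemma~\ref{2.6}(3) gives $T_{2t,2^{l_1}P}\cap T_{2t,2^{l_2}P}=\emptyset$ for $l_1\neq l_2$. Its statement restricts to $l_i\ge1$, but the proof works verbatim when one index is $0$. The reason is that if $l_1>l_2$, iterating the shift by $2^{l_2}P$ an even number $2^{l_1-l_2}$ of times gives $\alpha_{2^{l_1}P+i}=\alpha_i$. This contradicts $\alpha_{2^{l_1}P+i}=-\alpha_i$, because $\alpha_i\in\{\pm1\}$ is nonzero. So the union in Step~1 is disjoint, and $|T_{2t}|=\sum_{l=0}^{a}|T_{2t,2^lP}|$.

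\textbf{Step 3: counting a single piece.} Fix $m=2^lP$, which divides $t$, so $2m\mid 2t$. An element of $T_{2t,m}$ is determined freely by $(\alpha_1,\dots,\alpha_m)$ via $\alpha_{bm+i}=(-1)^b\alpha_i$. The only consistency check is wrap-around modulo $2t$: $2t/m$ is even, so $(-1)^{2t/m}=1$ and the rule is well defined. Every choice in $\{\pm1\}^m$ therefore extends, and distinct choices give distinct sequences. Thus $|T_{2t,m}|=2^m=2^{2^lP}$, and summing over $l$ gives the formula.

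\textbf{Main obstacle.} The delicate point is Step~1: one must make sure no element of $T_{2t}$ is missed or double counted. In particular, the intersections in Lemma~\ref{2.5}(3) (the alternating sequences) are harmless only because both sets already lie inside the single piece $T_{2t,P}$ by Lemma~\ref{2.6}(1). I would state explicitly that this containment absorbs all overlaps among divisors sharing the same power of $2$. The remaining steps are routine.
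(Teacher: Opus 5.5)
Your proposal is correct and follows the same route as the paper: reduce the union to the maximal pieces $T_{2t,2^lP}$ using Lemmas~\ref{2.5} and~\ref{2.6}, use that pieces with different $2$-power components are disjoint, and count $2^{2^lP}$ sequences in each piece before transferring the count through the bijection with $\mathcal{T}_{4t}$. Your version is also a bit more careful than the paper's. You explicitly check the wrap-around consistency ($2t/m$ even), and you note that the disjointness argument of Lemma~\ref{2.6}(3), whose statement excludes $l=0$, also works when one index is $0$.
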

\begin{proof}
  Let $x \in T_{2t}$. Here $$ x = \underbrace{\scriptstyle (\alpha_1,\alpha_2,..,\alpha_m,-\alpha_1, -\alpha_2,.., -\alpha_m,.....,\alpha_1, \alpha_2..,\alpha_m,-\alpha_1, -\alpha_2..., -\alpha_m)}_{2t \text{ length }},$$ where $m$ divides $t$ and $\alpha_i \in \{-1,1\}$ for all $i$. For each $\alpha_i$ there are two possible choices, and hence there are exactly $2^{m}$ words of this form. However, this count involves repetitions. Using \lemref{2.5} and \lemref{2.6}, we see that if $k \mid m$, then every element of $T_{2t,k}$ also lies in $T_{2t,m}$, provided that $m$ and $k$ have the same $2-$ power components. Thus, $T_{2t,k} \subseteq T_{2t,m}.$ Furthermore, maximal divisors with different $2$--power components give rise to disjoint sets $T_{2t,m}$. Consequently, each divisor contributes only through its maximal extension, and summing over maximal divisors ensures that each contribution is counted exactly once. Therefore, we have $$\lvert\;T_{2t}\;\rvert=\sum_{l=0}^{a}2^{2^lp_1^{s_1}p_2^{s_2}....p_n^{s_n}}.$$
  Since there is a bijection between $\mathcal{T}_{4t}$ and $T_{2t}$ given by $$ \underbrace{\scriptstyle AB^{\alpha_1}AB^{\alpha_2}...AB^{\alpha_m}AB^{-\alpha_1}AB^{-\alpha_2}...AB^{-\alpha_m}.......AB^{\alpha_1}AB^{\alpha_2}...AB^{\alpha_m}AB^{-\alpha_1}AB^{-\alpha_2}...AB^{-\alpha_m}}_{4t \;\text{length}} $$$$\mapsto  \underbrace{\scriptstyle (\alpha_1, \alpha_2,...,\alpha_m,-\alpha_1,-\alpha_2,...,-\alpha_m,.......,\alpha_1,\alpha_2,...,\alpha_m,-\alpha_1,-\alpha_2,...,-\alpha_m)}_{2t \;\text{length}}$$ 
  Therefore, $\lvert\;\mathcal{T}_{4t}\;\rvert=\sum_{l=0}^{a}2^{2^lp_1^{s_1}p_2^{s_2}....p_n^{s_n}}$.
\end{proof}

\begin{remark}
In order to count the sets $T_{2t,m}$ without overcounting, it is necessary to
understand how these sets can overlap. Since for some divisors $m \mid t$ the corresponding sequences intersect, we first identify the divisors that
contain all smaller overlapping ones. Writing
$t = 2^{a} p_{1}^{s_{1}} \cdots p_{n}^{s_{n}}$, the relation
$\alpha_{m+i} = -\alpha_{i}$ yields the iterated identity
$\alpha_{bm+i} = (-1)^{b}\alpha_{i}$, which is the key to describing these
intersections. \lemref{2.5} and \lemref{2.6} show that once a maximal divisor $m = 2^{\ell} p_{1}^{s_{1}} \cdots p_{n}^{s_{n}}$ is chosen, every
$T_{2t,k}$ with $k \mid m$ is contained in $T_{2t,m}$, while the maximal divisors
corresponding to different $2$-power components are disjoint. For example,
$T_{2t,p_{1}}$ is contained in $T_{2t,p_{1}p_{2}}$, and continuing in this
manner leads to the unique largest odd divisor $p_{1}^{s_{1}} \cdots p_{n}^{s_{n}}$.
Thus, each divisor $m = 2^{\ell} p_{1}^{j_{1}} \cdots p_{n}^{j_{n}}$ contributes
only through its maximal extension, and the sets $T_{2t,m}$ for different
values of~$\ell$ do not overlap. This explains why the summation in
Theorem~2.12 involves only the maximal divisors
$2^{\ell}p_{1}^{s_{1}} \cdots p_{n}^{s_{n}}$, ensuring that each contribution is
counted exactly once.
\end{remark}

Consider the map
$$
\beta : \mathcal{T}_{4t} \longrightarrow \mathcal{T}_{4t}, \qquad\beta\!\left(AB^{\alpha_1}\, AB^{\alpha_2} \cdots AB^{\alpha_{2t}}\right)= AB^{\alpha_{2t}}\, AB^{\alpha_1} \cdots AB^{\alpha_{2t-1}},
$$ which cyclically shifts the $2t$ consecutive $AB$-blocks. Recall that
$$
\mathcal{T}_{4t}=\bigcup_{m \lvert t}\{ AB^{\alpha_1}AB^{\alpha_2}....AB^{\alpha_{2t}} \mid \alpha_{m+i} = -\alpha_i\; \text{for all i where} \; \alpha_i \in \{-1, 1\}\}.
$$
With this structure, the action of $\beta$ rotates the sign pattern 
$(\alpha_1,\ldots,\alpha_{2t})$ while preserving the defining relation 
$\alpha_{m+i} = -\alpha_i$. Therefore, the cyclic group $\langle \beta \rangle$ acts on $\mathcal{T}_{4t}$, and the quotient $$\mathcal{T}_{4t}/\langle \beta \rangle = \mathscr{I}_{4t}$$ is the set of inert classes. Since $\beta$ induces a cyclic permutation of the $2t$ blocks, the group $\langle \beta \rangle$ is cyclic in order $2t$.
\begin{theorem}\label{2.9}
Let the prime factorization of t be $ t=2^ap_1^{s_1}p_2^{s_2}....p_n^{s_n} $ where $p_j's$ are distinct odd primes and $a, s_j \in \mathbb{Z}^+ \cup \{0\} \;\;\forall \; j.$ Define $$W'_t= \{\;m = 2^{l}p_1^{j_1}p_2^{j_2}\cdots p_n^{j_n}\mid 0\leq j_i \leq s_i, \;0\leq l\leq a\;\forall\;\; i\},$$ and, for each $m \in W$, define $$W_{t}^{'m}= \{ \;k = 2^{l}p_1^{k_1}p_2^{k_2}\cdots p_n^{k_n}\mid 0\leq k_i \leq j_i \;\;\forall \; i\}.$$ 
Then
$$ \lvert \;\mathscr{I}_{4t}\;\rvert = \sum_{m \in W'_t} \left ( \frac{\lvert \,T_{2t,m} \,\rvert- \lvert\, \cup_{k\in W_t^{'m},\, k\neq m}T_{2t,k}\,\rvert}{2m}\right).$$
\end{theorem}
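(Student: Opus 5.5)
We would count the orbits of the cyclic group $\langle\beta\rangle\cong\mathbb Z_{2t}$ acting on $\mathcal T_{4t}$, equivalently of $\langle\delta\rangle$ acting on $T_{2t}$. The idea is to sort each sequence by its \emph{minimal} anti-period and then use that sequences with a given minimal anti-period all have orbits of the same size.

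\textbf{Step 1: the minimal anti-period.} For $x\in T_{2t}$ we set $m(x)$ to be the least positive $m$ with $\alpha_{m+i}=-\alpha_i$ for all $i$. We claim $m(x)\in W'_t$. If $x\in T_{2t,m}$ and $x\in T_{2t,m'}$, then by the B\'ezout argument of \lemref{2.5}(3) we get $x\in T_{2t,\gcd(m,m')}$ when $m/\gcd(m,m')$ and $m'/\gcd(m,m')$ are both odd. By \lemref{2.5}(1) and \lemref{2.6}(3), the $2$-adic parts of $m$ and $m'$ must agree, so this parity condition always holds. Hence the set of anti-periods dividing $t$ has a unique minimal element $m(x)$, and every other anti-period is an odd multiple of it. This gives a partition
\[
T_{2t}=\bigsqcup_{m\in W'_t}\Bigl(T_{2t,m}\setminus\bigcup_{k\in W_t^{'m},\,k\ne m}T_{2t,k}\Bigr),
\]
because the anti-periods $k\mid m$ that could strictly contain $x$ are exactly those with the same $2$-power $2^l$, namely the $k\in W_t^{'m}$. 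We would read $W_t^{'m}$ as the set of divisors of $m$ with the same $2$-power; we take ``$m\in W$'' in the statement to mean $m\in W'_t$.

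\textbf{Step 2: the stabilizer.} Rotation commutes with the relation $\alpha_{m+i}=-\alpha_i$ (the second lemma after \lemref{2.1}), so $m(x)$ is constant on orbits and each piece of the partition is $\langle\delta\rangle$-invariant. Next we show that an $x$ with $m(x)=m$ has orbit of size exactly $2m$. Clearly $\delta^{2m}x=x$. Conversely, suppose $\delta^{r}x=x$ with $0<r<2m$, so $r$ is a period of $x$. Then $d=\gcd(r,2m)$ is also a period and $d<2m$ is a proper divisor of $2m$. If $2m/d$ is even, then $m$ is a multiple of $d$, so $\alpha_{m+i}=\alpha_i$, contradicting $\alpha_{m+i}=-\alpha_i$. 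If $2m/d$ is odd, then $d$ is even and $d/2$ satisfies $\alpha_{d/2+i}=-\alpha_i$ (combine period $d$ with anti-period $m$, which is an odd multiple of $d/2$). Since $d/2<m$, this contradicts minimality, but one must also check that $d/2$ divides $t$; it does, because $d/2\mid m\mid t$. So the stabilizer has index $2m$.

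\textbf{Step 3: conclusion.} Since all orbits in the $m$-th piece have size $2m$, that piece contributes its cardinality divided by $2m$ orbits, which is exactly the summand in the statement. Summing over $m\in W'_t$ gives the formula.

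The main obstacle is Step 1: making the minimal anti-period well defined and identifying exactly which $T_{2t,k}$ must be removed. This needs a gcd version of \lemref{2.5}(3) that allows $2$-power factors, namely that anti-periods $2^l u$ and $2^l v$ with $u,v$ odd give an anti-period $2^l\gcd(u,v)$. We would prove it with the same identity $\alpha_{bm+i}=(-1)^b\alpha_i$ and B\'ezout coefficients $c,d$, using that $cu+dv$ odd forces $c+d$ odd.
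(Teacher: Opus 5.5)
Your proof is correct and follows the same route as the paper: split $T_{2t}$ into the $\langle\delta\rangle$-invariant pieces $T_{2t,m}\setminus\bigcup_{k\in W_t^{'m},\,k\neq m}T_{2t,k}$, show every orbit in the $m$-th piece has exactly $2m$ elements, and sum the quotients. The paper only asserts that these pieces are disjoint and that the orbits have size exactly $2m$. Your minimal anti-period argument (anti-periods closed under gcd, with a common $2$-adic part) and your stabilizer computation via $\gcd(r,2m)$ supply exactly the justification the paper leaves out.
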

\begin{proof}
 Let $\{p'\}\in T_{2t}/\langle\delta\rangle$. let $p' \in T_{2t}$ be a representative. Suppose that $p'$ is of the form $$ p' = \underbrace{\scriptstyle (\alpha_1,\alpha_2,..,\alpha_m,-\alpha_1, -\alpha_2,.., -\alpha_m,.....,\alpha_1, \alpha_2..,\alpha_m,-\alpha_1, -\alpha_2..., -\alpha_m)}_{2t \text{ length }},$$ where $m$ divides $t$ and $\alpha_i \in \{-1,1\}.$ For each $m$, and for each element in $T_{2t,m}$, there are at most $2m$ conjugate elements in $T_{2t,m}$, and $2^m$ elements of this type. Conjugate elements corresponding to different divisors $m$ may coincide. Consequently, in order to enumerate distinct conjugacy classes in $T_{2t}/\langle \delta \rangle$,  we first identify all such overlaps arising
 from different divisors $m$ of $t$, and then exclude the repeated elements.

Define $W'_t= \{\;m = 2^{l}p_1^{j_1}p_2^{j_2}\cdots p_n^{j_n}\mid 0\leq j_i \leq s_i, \;0\leq l\leq a\;\forall\;\; i\}$, and, for each $m \in W$, define $W_{t}^{'m}= \{ \;k = 2^{l}p_1^{k_1}p_2^{k_2}\cdots p_n^{k_n}\mid 0\leq k_i \leq j_i \;\;\forall \; i\}.$ Using \lemref{2.5} and \lemref{2.6}, we see that for every 
\[
m = 2^{\,l} p_1^{j_1} p_2^{j_2} \cdots p_n^{j_n},
\qquad 0 \le j_i \le s_i,\; 0 \le l \le a,
\]
each $k \in W_{t}^{'m}$ where $k\neq m$  produce repetitions among its conjugate elements, we have
\[
T_{2t,k} \subset 
T_{2t,m},
\]
Therefore, by removing all elements of 
\[
T_{2t,k} \text{ where }  k \neq m
\]
from $T_{2t,m}$, we obtain exactly $2m$ conjugate elements for each element in 
\[
T_{2t,m} \setminus \cup _{k\in W_t^{'m}, k \neq m}T_{2t,k}.
\]
So for each element $p \in T_{2t,m} \setminus
\cup _{k\in W_t^{'m}, k \neq m}T_{2t,k}$ , the conjugacy classes in $T_{2t,m} \setminus
\cup _{k\in W_t^{'m}, k \neq m}T_{2t,k}$ is 

$$ \left ( \frac{\lvert\, T_{2t,m} \,\rvert- \lvert \,\cup_{k\in W_t^{'m},\, k\neq m}T_{2t,k}\,\rvert}{2m}\right).$$ 
The number of conjugacy classes in $T_{2t}$ is $$ \lvert\; T_{2t} \;/ \langle\delta\rangle\;\rvert = \sum_{m \in W'_t} \left ( \frac{\lvert \,T_{2t,m}\, \rvert- \lvert\, \cup_{k\in W_t^{'m},\, k\neq m}T_{2t,k}\,\rvert}{2m}\right).$$ \\Since there is a bijection between $\mathcal{T}_{4t}$ and $T_{2t}$, therefore, there is a bijection between \\$\mathscr{I}_{4t}=\mathcal{T}_{4t}\;/\langle\beta\rangle$ and $T_{2t} \;/ \langle\delta\rangle$. Hence, 
$$  \lvert \,\mathscr{I}_{4t}\,\rvert = \sum_{m \in W'_t} \left (\frac{\lvert\, T_{2t,m} \,\rvert- \lvert\, \cup_{k\in W_t^{'m},\, k\neq m}T_{2t,k}\,\rvert}{2m}\right).$$  
This completes the proof. 
\end{proof}

\medskip 

We denote by $\mathscr{I}^p_{4t}$ the set of all primitive inert classes in $\mathscr{I}_{4t}$ and $\mathscr{I}^{np}_{4t}$ the set of all non-primitive inert classes in $\mathscr{I}_{4t}$.  From \corref{2.2} we know that a conjugacy class is primitive then necessarily $m = t$. Therefore by applying \thmref{2.9}, we can determine the number of primitive inert classes in $\mathscr{I}_{4t}$. 
We now state the corresponding corollary.
\begin{cor} \label{2.10}Specializing \thmref{2.9} in case $m=t$, we obtain the following result,
  \begin{enumerate}
  \item if there exist non trivial odd primes $p_i$ in t, then $\lvert \,\mathscr{I}^p_{4t}\,\rvert =\left ( \frac{\lvert\, T_{2t,t} \,\rvert- \lvert\,\cup_{k\in W_t^{'t}, k \neq t} T_{2t,k}\,\rvert}{2t}\right).$ 
  \item if $t$ is of the form $t =2^l$, then  $\lvert \,\mathscr{I}^p_{4t}\,\rvert = \frac{2^{t}}{2t}=\left (\frac{\lvert\, T_{2t,t} \,\rvert}{2t}\right)$.
\end{enumerate}
\end{cor}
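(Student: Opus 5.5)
The plan is to read off Corollary~\ref{2.10} from the counting formula of \thmref{2.9} by isolating the term $m=t$. \corref{2.2} says a primitive class can only come from the divisor $m=t$. Conversely, a word in $T_{2t,t}$ lying in no $T_{2t,k}$ with $k\in W_t^{'t}$, $k\neq t$, has full orbit of size $2t$ under $\langle\delta\rangle$, and so it represents a non-power. So I would first identify $\mathscr{I}^p_{4t}$ with the orbits of $\langle\delta\rangle$ on
\[
T_{2t,t}\setminus \bigcup_{k\in W_t^{'t},\,k\neq t} T_{2t,k}.
\]
These orbits are exactly the classes counted by the summand $m=t$ in \thmref{2.9}. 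The proof of that theorem shows each such element has exactly $2t$ distinct cyclic shifts, all of which stay in the set by the shift-invariance lemma. Dividing the cardinality of the set by $2t$ therefore gives part (1).

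For part (2), take $t=2^l$, so there are no odd primes and $n=0$. In the definition of $W_t^{'m}$ the $2$-power is frozen at the exponent of $m$. With $m=t=2^l$ the only admissible $k$ is $k=2^l=t$ itself, so the union $\bigcup_{k\neq t}T_{2t,k}$ is empty. It then remains to compute $|T_{2t,t}|$. A sequence in $T_{2t,t}$ is determined freely by $(\alpha_1,\dots,\alpha_t)$, with the second half forced to be $-\alpha_i$. This gives $|T_{2t,t}|=2^t$ and hence $|\mathscr{I}^p_{4t}|=2^t/(2t)$.

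The one point needing care, which I expect to be the main obstacle, is checking that $W_t^{'t}$ really excludes the smaller powers $2^{l'}$ with $l'<l$. I would justify this with \lemref{2.5}(1). It shows the sets $T_{2t,2^{l'}}$ and $T_{2t,2^l}$ are disjoint, so they can never cause overcounting inside $T_{2t,t}$. That disjointness is exactly why the paper's definition of $W_t^{'m}$ keeps the same $2$-power $2^l$.

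I would also check that no element of $T_{2t,t}$ has a nontrivial stabilizer under $\langle\delta\rangle$. Suppose $\delta^k x=x$ with $k$ a proper divisor of $2t$. Then $k$ divides $t$, since $k$ must be a power of $2$ less than $2t$. The relation $\alpha_{t+i}=-\alpha_i$ would then give $\alpha_i=-\alpha_i$, a contradiction. So every orbit has size $2t$ and the division is exact.
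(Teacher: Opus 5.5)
Your proposal is correct and takes the same route as the paper: \corref{2.2} confines primitive classes to the $m=t$ summand of \thmref{2.9}, and for $t=2^l$ one has $W_t^{'t}=\{t\}$ and $|T_{2t,t}|=2^t$. Beyond the paper's one-line justification, you add the converse inclusion (a full $\langle\delta\rangle$-orbit of length $2t$ means a non-power) and an explicit trivial-stabilizer check; that check uses $t=2^l$ as written, while for part (1) the full-orbit fact comes from the proof of \thmref{2.9}, exactly as in the paper.
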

\begin{example} 
We illustrate the above discussion with the case $t=3$. The prime factorization of $t$ is 
$$
t=3=2^{0}\cdot 3^{1},
$$
so $a=0$ and the unique maximal odd divisor is $3$. The divisors of $t$ are $m=1,3$.  
By \lemref{2.5} and \lemref{2.6}, we have 
$$
T_{6,1}\subset T_{6,3},
$$
and hence only the maximal divisor $m=3$ contributes new elements.
The defining relation for $T_{6,3}$ is
$$
\alpha_{3+i}=-\alpha_i \quad (i=1,2,3),
$$
which implies that every element of $T_{6,3}$ is uniquely determined by its first three entries and has the form
$$
(\alpha_1,\alpha_2,\alpha_3,-\alpha_1,-\alpha_2,-\alpha_3), \qquad \alpha_i \in \{\pm1\}.
$$
Thus there are $2^3 = 8$ elements in total, namely
$$
\begin{aligned}
&(1,1,1,-1,-1,-1), \quad (1,1,-1,-1,-1,1), \quad (1,-1,1,-1,1,-1), \quad (1,-1,-1,-1,1,1),\\
&(-1,1,1,1,-1,-1), \quad (-1,1,-1,1,-1,1), \quad (-1,-1,1,1,1,-1), \quad (-1,-1,-1,1,1,1).
\end{aligned}
$$

For $m=1$, the relation $\alpha_{i+1}=-\alpha_i$ implies strict alternation, yielding
$$
T_{6,1}=\{(1,-1,1,-1,1,-1), \, (-1,1,-1,1,-1,1)\}.
$$
Both of these sequences appear in the above list, confirming explicitly that $T_{6,1}\subset T_{6,3}$.  
Using \thmref{2.7}, we have $\lvert\,T_6\,\rvert=8$ which agrees with the explicit counting.  
\\
In \thmref{2.9} the first sum for $m=3$ gives $(\lvert\,T_{6,3}\,\rvert-\lvert\,T_{6,1}\,\rvert)/(2\cdot 3) = (8-2)/6 = 1$, and the second sum for $m=1$ gives $\lvert\,T_{6,1}\,\rvert/(2\cdot 1) = 2/2 = 1$.  
Hence, 
$$
\lvert\,\mathscr{I}_{12}\,\rvert = 1 + 1 = 2,
$$
showing that there are exactly two conjugacy classes under cyclic permutation.

Explicitly, the set $T_6 = T_{6,3} = \{ (\alpha_1,\alpha_2,\alpha_3,-\alpha_1,-\alpha_2,-\alpha_3) \}$ decomposes as follows: the alternating sequences $T_{6,1} = \{(1,-1,1,-1,1,-1), \, (-1,1,-1,1,-1,1)\}$ form a single conjugacy class under cyclic shifts, while the remaining six sequences
$$ \begin{aligned}
&(1,1,1,-1,-1,-1), \quad (1,1,-1,-1,-1,1), \quad (1,-1,1,-1,1,-1), \\
& (1,-1,-1,-1,1,1), \quad (-1,1,1,1,-1,-1), \quad (-1,-1,1,1,1,-1)
\end{aligned}
$$
form the other conjugacy class. Thus, under cyclic permutation, $T_6$ splits into exactly two conjugacy classes: one represented by the alternating word $(1,-1,1,-1,1,-1)$ and the other by $(1,1,1,-1,-1,-1)$. This confirms the count obtained from the formula and illustrates how the subtraction term prevents overcounting.
\end{example}
 We now investigate the asymptotic behavior of the number of inert classes in $\Gamma$ as $t$ tends to infinity.
\begin{theorem}\label{2.11}
   As $t \to \infty$,  $\lvert\;\mathcal{T}_{4t}\;\rvert \sim 2^t$. 
\end{theorem}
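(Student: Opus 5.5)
The plan is to read off the asymptotics directly from the exact formula of \thmref{2.7}. Write $t = 2^a q$ with $q = p_1^{s_1}p_2^{s_2}\cdots p_n^{s_n}$ the largest odd divisor of $t$. Then \thmref{2.7} reads
\[
\lvert\,\mathcal{T}_{4t}\,\rvert = \sum_{l=0}^{a} 2^{2^l q}.
\]
The term with $l=a$ is exactly $2^{2^a q} = 2^t$. So it suffices to show that the remaining terms are $o(2^t)$, uniformly in how $t$ factors, since $t\to\infty$ along all integers and not only along a particular arithmetic family.

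For the error term I will use that the exponents $2^l q$ for $0\le l\le a-1$ are all at most $2^{a-1}q = t/2$. Moreover, each exponent is at least double the previous one, so the terms grow at least geometrically with ratio $2$ (in fact much faster). I will bound the tail by comparing with a geometric series read downward from its largest term:
\[
\sum_{l=0}^{a-1} 2^{2^l q} \;\le\; 2^{t/2}\sum_{j\ge 0} 2^{-j} \;=\; 2\cdot 2^{t/2}.
\]
This uses $2^{2^{l}q} \le 2^{2^{l+1}q}/2$, which holds because $2^{l+1}q - 2^l q = 2^l q \ge 1$. When $a=0$ (that is, $t$ odd) the sum is empty and $\lvert\mathcal{T}_{4t}\rvert = 2^t$ exactly.

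Combining the two steps gives
\[
1 \;\le\; \frac{\lvert\,\mathcal{T}_{4t}\,\rvert}{2^t} \;\le\; 1 + 2\cdot 2^{-t/2} \;\longrightarrow\; 1,
\]
which is the claim $\lvert\,\mathcal{T}_{4t}\,\rvert \sim 2^t$.

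There is no real obstacle here beyond making the tail bound uniform over the factorisation of $t$. Doing so is exactly why I compare with a geometric series rather than using the crude bound $a\cdot 2^{t/2}$; the crude bound would still suffice, since $a\le \log_2 t$, but it is less clean. The substantive input is entirely the counting formula of \thmref{2.7}, and I take it as given.
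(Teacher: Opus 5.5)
Your proof is correct and follows the paper's argument: both start from the exact formula of \thmref{2.7}, isolate the $l=a$ summand $2^{t}$, and show the remaining summands are $o(2^t)$ because each has exponent at most $t/2$. The only difference is in the tail bound: the paper uses the cruder $\nu(t)\,2^{t/2}$ with $\nu(t)\le\log_2 t$, while your geometric-series comparison gives the uniform bound $2\cdot 2^{t/2}$, which avoids the logarithmic factor but, as you note, is not needed.
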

\begin{proof}
    From Theorem~\ref{2.7} we have 
$$\lvert\;\mathcal{T}_{4t}\;\rvert=\sum_{l=0}^{a}2^{2^lp_1^{s_1}p_2^{s_2}....p_n^{s_n}}$$ for the prime factorization of $t=2^ap_1^{s_1}p_2^{s_2}....p_n^{s_n}$, where $a_j,\, s_j \in \mathbb{Z}^+\cup\{0\}$. Thus, we may write $\frac{t}{2^a}=p_1^{s_1}p_2^{s_2}....p_n^{s_n} $ in the above expression,
$$\lvert\;\mathcal{T}_{4t}\;\rvert=\sum_{l=0}^{a}2^{2^l\cdot\frac{t}{2^a}}=\sum_{l=0}^{a}2^{\frac{t}{2^{a-l}}}$$
$$\lvert\;\mathcal{T}_{4t}\;\rvert=\sum_{j=0}^{a}2^{\frac{t}{2^{j}}}$$ Let $\nu(t)$ denote the highest power of $2$ dividing $t$ (that is, the 2-adic valuation of $t$), that is, $\nu(t)=a.$ Then
$$ \lvert\;\mathcal{T}_{4t}\;\rvert=\sum_{j=0}^{\nu(t)}2^{(\frac{t}{2^{j}})}=2^t+\sum_{j=1}^{\nu(t)} 2^t\cdot2^{(\frac{t}{2^{j}}-t)}=2^t(1+R)$$ where $$R=\sum_{j=1}^{\nu(t)}2^{\frac{t}{2^{j}}-t}=\sum_{j=1}^{\nu(t)}2^{-{(t-\frac{t}{2^{j}})}}=\sum_{j=1}^{\nu(t)}\frac{1}{2^{(t-\frac{t}{2^{j}})}}$$
$$R \,\leq\, \nu(t)\frac{1}{2^{(t-\frac{t}{2})}}=\nu(t)2^{-\frac{t}{2}}$$
Since $\nu(t) < \log_{2}(t) \ll 2^{t/2}$, we obtain
\[
0 \le R \le \frac{\log_{2}(t)}{2^{t/2}}.
\]
as $t \to \infty$, the right-hand side tends to $0$, and hence $R \to 0$. Therefore,
\[
\frac{\lvert\,\mathcal{T}_{4t}\,\rvert} {2^{\,t}}\to 1 \quad \text{as } t \to \infty,
\]and consequently, $\lvert\,\mathcal{T}_{4t}\,\rvert \sim 2^{\,t}.$
\end{proof}
\begin{theorem}\label{2.12} As $t \to \infty$, 
   \begin{enumerate}
   \item $\lvert\,\mathscr{I}_{4t}\,\rvert\, \sim \frac{2^{t-1}}{t}$, 
   \item $\lvert\,\mathscr{I}_{\leq4t}\,\rvert\, \sim \frac{2^{t}}{t+1}$. 
   \end{enumerate}
\end{theorem}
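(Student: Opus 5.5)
We will derive both parts from the exact count $|\mathcal{T}_{4t}|=\sum_{j=0}^{\nu(t)}2^{t/2^j}$ of \thmref{2.7} and \thmref{2.11}, combined with the orbit decomposition of \thmref{2.9}.

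For part (1), split $T_{2t}$ into two pieces: the sequences with \emph{full} orbit under $\langle\delta\rangle$ (orbit size exactly $2t$), and the rest. If $x\in T_{2t}$ has an orbit of size $d<2t$, then $x$ is periodic with a proper period dividing $2t$. Such $x$ is determined by at most $t$ of its entries. Moreover, by the anti-periodicity $\alpha_{m+i}=-\alpha_i$ coming from some $m\mid t$, the number of such $x$ is bounded by $\sum_{d\mid 2t,\,d<2t}2^{d/2}$ or a similar crude bound. The point is that this is $O(t\,2^{t/2})$ rather than $O(2^t)$. The heuristic is that sequences in $T_{2t,t}$ with a proper period $d$ must satisfy two constraints, and a proper divisor of $2t$ is at most $t$, which leaves at most about $t/2$ free bits.

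Concretely, we would show that every non-full-orbit element of $T_{2t,t}$ lies in some $T_{2t,k}$ with $k\mid t$, $k<t$, or has period $d\mid 2t$ with $d\le 2t/3$ (for $d\mid 2t$, $d\ne 2t, t$). In either case it is determined by at most $t/2+O(1)$, respectively $t/3$, bits. Combining this with $\nu(t)\le\log_2 t$ bounds the number of exceptional elements by $O(\tau(t)\,2^{t/2})=o(2^t/t)$.

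Counting orbits then gives
\[
|\mathscr I_{4t}|=\frac{|T_{2t}|-E}{2t}+O(E),
\]
where $E$ is the number of exceptional elements. Hence $|\mathscr I_{4t}|=\frac{2^t(1+o(1))}{2t}+o(2^t/t)\sim 2^{t-1}/t$, using $|\mathcal{T}_{4t}|\sim 2^t$ from \thmref{2.11}. This is consistent with \corref{2.10} and with part (2) of \thmref{inert}.

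For part (2), since inert classes only occur in lengths divisible by $4$ (Lemma~\ref{2.1}), we have $|\mathscr I_{\le 4t}|=\sum_{s\le t}|\mathscr I_{4s}|$. The terms grow geometrically with ratio $2$, so by a Stolz--Ces\`aro style comparison
\[
\sum_{s\le t}\frac{2^{s-1}}{s}\sim \frac{2^{t-1}}{t}\sum_{k\ge0}2^{-k}=\frac{2^t}{t}\sim\frac{2^t}{t+1}.
\]
The formal step is this: given $\varepsilon>0$, choose $s_0$ with $|\mathscr I_{4s}|/(2^{s-1}/s)\in(1-\varepsilon,1+\varepsilon)$ for $s\ge s_0$. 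Then bound the tail $\sum_{s_0\le s\le t}$ by dominated convergence on $k=t-s$, using $\frac{t}{s}2^{-(t-s)}\le 2^{-k}(1+k)$, while the finitely many early terms are negligible.

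The main obstacle is making the exceptional-element bound rigorous, because \thmref{2.9} as stated treats overlaps via the sets $W_t^{'m}$ somewhat loosely. I would avoid that formula entirely and argue directly with Burnside's lemma on $T_{2t}$:
\[
|\mathscr I_{4t}|=\frac1{2t}\sum_{k=0}^{2t-1}\#\mathrm{Fix}(\delta^k).
\]
The identity term contributes $|T_{2t}|\sim 2^t$. For each $k\ne0$, a fixed sequence has period $\gcd(k,2t)\le t$, and combined with anti-periodicity it is determined by at most $\max(t/2,\,t/3)+1$ bits (the case $\gcd(k,2t)=t$ forces compatibility with $\alpha_{t+i}=\pm\alpha_i$, which halves the freedom). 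This gives a total of $O(t\,2^{t/2+1})$, which is $o(2^t)$ after dividing by $2t$. The delicate case is $\gcd(k,2t)=t$: there I would check that period $t$ together with antiperiod $m\mid t$ (with $t/m$ odd giving a contradiction unless $m$ is such that $t/m$ is even) forces at most $2^{t/2}$ sequences.
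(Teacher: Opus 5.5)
Your argument is correct, but it takes a different route from the paper.

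\textbf{The paper's route.} It proves (1) through the exact orbit formula of \thmref{2.9}. It stratifies $T_{2t}$ by minimal antiperiod $m$, asserts that each stratum consists of orbits of size exactly $2m$, and takes the stratum $m=t$ as the main term. This sandwiches $\lvert\mathscr I_{4t}\rvert$ between $\frac{2^t-\sum_{k\mid t,\,k\neq t}2^k}{2t}$ and $\sum_{m\mid t}\frac{2^m}{2m}$, with error terms $O(t\,2^{t/2})$. For (2) it sums these two-sided bounds over $n\le t$ to get $\lvert\mathscr I_{\le 4t}\rvert\sim\sum_{n\le t}\frac{2^n}{2n}$, and then cites Stolz--Ces\`aro.

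\textbf{Your route.} You bypass \thmref{2.9} entirely. Burnside's lemma needs only $\lvert T_{2t}\rvert\sim 2^t$ (\thmref{2.11}) and a crude bound on $\mathrm{Fix}(\delta^k)$ for $k\neq 0$. You then deduce part (2) from part (1) alone by dominated convergence, which is the Stolz--Ces\`aro step made explicit.

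\textbf{Comparison.} The paper's route gives explicit two-sided bounds and a formula that is reused for primitive classes (\corref{2.10}, \lemref{2.13}). However, it rests on the orbit-size claim in \thmref{2.9}, which is argued rather loosely. Your route is more robust and yields the cleaner estimate $\lvert\mathscr I_{4t}\rvert=\frac{\lvert T_{2t}\rvert}{2t}+O(2^{t/2})$.

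\textbf{Making the fixed-point bound airtight.} Your ``halves the freedom'' heuristic should be replaced by the following explicit fact.
\begin{itemize}
\item Suppose $x\in T_{2t,m}$ has a period $d\mid 2t$. Then $e=\gcd(2m,d)$ is also a period.
\item $e\nmid m$, since otherwise $\alpha_{i+m}=\alpha_i$. Hence $m\equiv e/2 \pmod e$ and $x\in T_{2t,e/2}$.
\item So $x$ is determined by $e/2\le d/2$ entries. With $d=\gcd(k,2t)\le t$ for $k\neq 0$, this gives $\#\mathrm{Fix}(\delta^k)\le\sum_{j\le d/2}2^j<2^{d/2+1}\le 2^{t/2+1}$.
\end{itemize}

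\textbf{A small correction.} Your claim that the case $\gcd(k,2t)=t$ gives at most $2^{t/2}$ sequences is slightly off. That fixed set is $\bigsqcup_{l<\nu(t)}T_{2t,\,2^{l}t/2^{\nu(t)}}$, which for $t=4$ has $6>4$ elements. The bound $2^{t/2+1}$ is all you need, so the conclusion is unaffected.
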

\begin{proof}
\begin{enumerate}
\item  From \thmref{2.9}, we have
   $$\lvert \;\mathscr{I}_{4t}\;\rvert = \sum_{m \in W'_t} \left ( \frac{\lvert\, T_{2t,m} \,\rvert- \lvert \,\cup_{k\in W_t^{'m},\, k\neq m}T_{2t,k}\,\rvert}{2m}\right)\leq\sum_{m \in W_t'} \left ( \frac{\lvert \,T_{2t,m}\, \rvert}{2m}\right)=\sum_{m|t}\frac{2^m}{2m}.$$ 
   Now, observe
   $$\bigg(\frac{2^t-\sum_{k \lvert t, k \neq t}2^k}{2t}\bigg)\leq\lvert \;\mathscr{I}_{4t}\;\rvert \leq \frac{2^t}{2t}+\sum_{m|t,\, m \neq t}\frac{2^m}{2m}.$$
   \begin{equation}\label{*} 
       \frac{2^t}{2t}-E' \leq \lvert \;\mathscr{I}_{4t}\;\rvert \leq \frac{2^t}{2t}+E
        \end{equation}
   where  $ E= \sum_{m|t,\, m \neq t}\frac{2^m}{2m} $ and $E'=\sum_{k|t,\, k \neq t}\frac{2^k}{2t}$. \\Now, since the largest proper divisor of $t$ is $t/2$,
   $$ E= \sum_{m|t,\, m \neq t}\frac{2^m}{2m} \leq \sum_{m|t,\, m \neq t}\frac{2^m}{2} \leq \sum_{m|t,\, m \neq t}\frac{2^{t/2}}{2}< \varphi(t)\frac{2^{t/2}}{2}$$  
   $$E\,<\,\frac{t\,2^{t/2}}{2} \quad ( \varphi(t)\,\leq\, t)$$
   Dividing the above inequality by $\frac{2^t}{2t},$ we get
   $$\frac{E}{\frac{2^t}{2t}}\,\leq \,\frac{\frac{t\,2^{t/2}}{2}}{\frac{2^t}{2t}} = \frac{t^2}{2^{t/2}}, \text{ as } t \to \infty \quad  \frac{E}{\frac{2^t}{2t}}\to 0$$ because an exponential function $2^{t/2}$ grows much faster than any polynomial such as $t^2.$
   and 
   $$ E'=\sum_{\substack{k \lvert t \\ k \neq t}} \frac{2^k}{2t}  \leq \sum_{\substack{k \lvert t \\ k \neq t}}\frac{2^k}{2} <\varphi(t)\frac{2^{t/2}}{2}$$
   $$E'<t\frac{2^{t/2}}{2}$$
   Dividing the above inequality by $\frac{2^t}{2t},$ we get
   $$ \frac{E'}{\frac{2^t}{2t}}\,< \,\frac{t\frac{2^{t/2}}{2}}{\frac{2^t}{2t}}=\frac{t^2}{2^{t/2}}\text{ as } t \to \infty \quad  \frac{E'}{\frac{2^t}{2t}}\to 0$$
   From \eqnref{*}, 
$$1-\frac{E'}{\frac{2^t}{2t}}\leq\frac{\lvert \;\mathscr{I}_{4t}\;\rvert}{\frac{2^t}{2t}} \leq 1+\frac{E}{\frac{2^t}{2t}}$$
as $t \to \infty$, $\dfrac{E}{\frac{2^t}{2t}}$ and $\dfrac{E'}{\frac{2^t}{2t}}$ both tend to $0$. Therefore,
$$\frac{\lvert \;\mathscr{I}_{4t}\;\rvert}{\frac{2^t}{2t}} \to 1.$$
Hence, $\lvert \;\mathscr{I}_{4t}\;\rvert\, \sim\, \frac{2^{t-1}}{t}.$ 
\item  Note that $$\lvert \;\mathscr{I}_{\leq4t}\;\rvert = \sum_{n=1}^{t}\,\,\lvert\,\mathscr{I}_{4n}\,\rvert=\sum_{n=1}^{t}\sum_{m \in W'_n} \left ( \frac{\lvert\, T_{2n,m} \,\rvert- \lvert\, \cup_{k\in W_n^{'m},\, k\neq m}T_{2n,k}\,\rvert}{2m}\right)\leq\sum_{n=1}^{t}\sum_{m \lvert n}\frac{2^m}{2m}.$$ It follows that
$$\sum_{n=1}^{t}\bigg({\frac{2^n-\sum_{k \lvert n, k \neq n}2^k}{2n}\bigg)}\,\leq\,\lvert \;\mathscr{I}_{\leq4t}\;\rvert \,\leq\,\sum_{n=1}^{t}\frac{2^n}{2n}+\sum_{n=1}^{t}\sum_{\substack{m \lvert n\\m \neq n}} \frac{2^m}{2m}.$$
From (1), we have $$\sum_{m|t,\, m \neq t}\frac{2^m}{2m}\,<\,\frac{t\,2^{t/2}}{2} \quad \text{ and } \quad \sum_{\substack{k \lvert t \\ k \neq t}} \frac{2^k}{2t} <t\frac{2^{t/2}}{2}$$Therefore, $$\sum_{n=1}^{t}\sum_{m|n,\, m \neq n}\frac{2^m}{2m}\,<\,\sum_{n=1}^{t}\frac{n\,2^{n/2}}{2}\,<\,\frac{t^2\,2^{t/2}}{2}$$ and similarly, $$\sum_{n=1}^{t}\sum_{\substack{k \lvert n \\ k \neq n}} \frac{2^k}{2n} <\sum_{n=1}^{t}n\frac{2^{n/2}}{2}\,<\,\frac{t^2\,2^{t/2}}{2}$$ Substituting these inequalities, we obtain
$$\sum_{n=1}^{t}\frac{2^n}{2n}-t^2\frac{2^{t/2}}{4}\,\leq\,\lvert \;\mathscr{I}_{\leq4t}\;\rvert \,\leq\,\sum_{n=1}^{t}\frac{2^n}{2n}+t^2\frac{2^{t/2}}{4}$$
$$1-\frac{t^2\frac{2^{t/2}}{4}}{\sum_{n=1}^{t}\frac{2^n}{2n}}\,\leq\,\frac{\lvert \;\mathscr{I}_{\leq4t}\;\rvert}{\sum_{n=1}^{t}\frac{2^n}{2n}} \,\leq\,1+\frac{t^2\frac{2^{t/2}}{4}}{\sum_{n=1}^{t}\frac{2^n}{2n}}$$
Since $\sum_{n=1}^{t}\frac{2^n}{2n}\,\geq\,\frac{2^t}{t},$ we get
$$1-\frac{t^2\frac{2^{t/2}}{4}}{\frac{2^t}{2t}}\,\leq\,\frac{\lvert \;\mathscr{I}_{\leq4t}\;\rvert}{\sum_{n=1}^{t}\frac{2^n}{2n}} \,\leq\,1+\frac{t^2\frac{2^{t/2}}{4}}{\frac{2^t}{2t}},$$
$$1-\frac{t^3}{2\cdot2^{t/2}}\,\leq\,\frac{\lvert \;\mathscr{I}_{\leq4t}\;\rvert}{\sum_{n=1}^{t}\frac{2^n}{2n}} \,\leq\,1+\frac{t^3}{2\cdot2^{t/2}},$$
as $t \to \infty ,$ $\frac{t^3}{2\cdot2^{t/2}} \to 0$, therefore,
$$\frac{\lvert \;\mathscr{I}_{\leq4t}\;\rvert}{\sum_{n=1}^{t}\frac{2^n}{2n}} \to 1 \quad \text{ as  } \,\,t \to \infty.$$
Hence, $\lvert \;\mathscr{I}_{\leq4t}\;\rvert\sim \sum_{n=1}^{t}\frac{2^n}{2n}.$ Using the Stolz-Cesaro theorem \cite{MR2456018}, we can see that $\sum_{n=1}^{t}\frac{2^n}{2n} \sim \frac{2^{t}}{t+1}.$
\end{enumerate}
This gives the desired result. 
\end{proof}
\begin{lemma}\label{2.13}
$\hangindent=2em$
\begin{enumerate}
    \item $\lvert \;\mathscr{I}^{np}_{4t}\;\rvert\, \leq\, \frac{t}{4}2^{t/2}$,
    \item $\lvert \;\mathscr{I}^{np}_{\leq4t}\;\rvert\, \leq\, \frac{t^2}{4}2^{t/2}$. 
\end{enumerate}
\end{lemma}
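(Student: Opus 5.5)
The plan is to bound the non-primitive inert classes by the classes coming from proper divisors $m$ of $t$. By \corref{2.2}, a primitive inert class of length $4t$ forces $m=t$. So every non-primitive class in $\mathscr{I}_{4t}$ has a representative lying in $T_{2t,m}$ for some proper divisor $m\mid t$, $m\neq t$. More precisely, I would argue that such a class is a proper power: its exponent sequence has period $2m$ with $m<t$, so the word is a $(t/m)$-fold repetition.

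Hence
\[
\lvert\,\mathscr{I}^{np}_{4t}\,\rvert \le \sum_{m\mid t,\ m\neq t} \frac{\lvert T_{2t,m}\rvert}{2m}\cdot(\text{at most one class per orbit}) \le \sum_{m\mid t,\ m\neq t} \lvert\,T_{2t,m}/\langle\delta\rangle\,\rvert .
\]
The simplest safe bound on the number of orbits meeting $T_{2t,m}$ is $\lvert T_{2t,m}\rvert = 2^m$. Slightly better, by \thmref{2.9} each orbit in the stratum indexed by $m$ has size $2m\ge 2$, so it contributes at most $2^m/2$.

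Next I bound the sum. Every proper divisor satisfies $m\le t/2$, so $2^m\le 2^{t/2}$. The number of proper divisors of $t$ is at most $t/2$, since they all lie in $\{1,\dots,t/2\}$. This gives
\[
\lvert\,\mathscr{I}^{np}_{4t}\,\rvert \le \frac{t}{2}\cdot\frac{2^{t/2}}{2}=\frac{t}{4}\,2^{t/2},
\]
which is part (1). The key point here is to use orbit size at least $2$, rather than $2m$, to obtain the factor $\tfrac12$. Equivalently, I can use the same estimate $E<\varphi(t)2^{t/2}/2$ from the proof of \thmref{2.12}, with the count of proper divisors at most $t/2$ replacing $\varphi(t)\le t$.

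For part (2), I sum part (1) over $n=1,\dots,t$:
\[
\lvert\,\mathscr{I}^{np}_{\le 4t}\,\rvert=\sum_{n=1}^t \lvert\,\mathscr{I}^{np}_{4n}\,\rvert \le \sum_{n=1}^t \frac{n}{4}2^{n/2}\le t\cdot\frac{t}{4}2^{t/2}=\frac{t^2}{4}2^{t/2},
\]
using monotonicity of $n2^{n/2}$.

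The main obstacle is the first step: justifying rigorously that a non-primitive inert class must come from a proper divisor $m<t$, that is, the converse direction to \corref{2.2}. One has to rule out a class in $T_{2t,t}\setminus\bigcup_{k\ne t}T_{2t,k}$ being a proper power. If the word were a $d$-th power with $d>1$, its sign sequence would have period $2t/d$. Combined with the antiperiod $t$, this gives an antiperiod or period structure of length $\gcd$-type dividing $t$ properly, so the sequence lies in some $T_{2t,k}$ with $k<t$, or else in a purely periodic pattern. I would first try to handle this via the iterated identity $\alpha_{bm+i}=(-1)^b\alpha_i$. If that gets messy, I would simply count all sequences of period a proper divisor of $2t$, which is still at most about $t\,2^{t}$ divided by the orbit size. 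This yields a bound of the same shape, after bounding periods $\le t$ by antiperiod analysis.
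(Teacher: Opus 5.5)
Your proof is correct, but it reaches the bound by a different decomposition than the paper. The paper passes to shorter lengths. It quotes Lemma~2.4 of \cite{combigrowth} to write $\lvert\mathscr{I}^{np}_{4t}\rvert=\sum_{s\mid t,\,s<t}\lvert\mathscr{I}^{p}_{4s}\rvert$, i.e.\ each non-primitive inert class is a power of a primitive inert class of length $4s$. It then bounds each term by \corref{2.10}: $\lvert\mathscr{I}^p_{4s}\rvert\le 2^s/(2s)\le 2^s/2$.

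You stay at length $4t$ and use a union bound instead. Every non-primitive orbit lies in some $T_{2t,m}$ with $m$ a proper divisor of $t$. Moreover, $T_{2t,m}$ contains at most $2^m/2$ orbits, because a sequence with an antiperiod is never constant, so every orbit has size at least $2$. Both routes arrive at $\sum_{s\mid t,\,s<t}2^s/2$ and finish with the same crude estimate, and part (2) is identical.

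The paper's version is shorter, but it implicitly uses that the primitive root of an inert class is again inert with an even number of blocks, so that it has length $4s$ with $s\mid t$. Your version must prove the sequence-level form of exactly this fact (the obstacle you flagged), and is therefore more self-contained.

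Your first idea settles that obstacle. Suppose $\alpha$ has antiperiod $t$ and period $2t/d$ with $d>1$. Put $g=\gcd(2t/d,t)=x\cdot\frac{2t}{d}+y\,t$ for suitable integers $x,y$; then $\alpha_{i+g}=(-1)^y\alpha_i$.
\begin{itemize}
\item The plus sign is impossible, since $g\mid t$ would then make $t$ a period.
\item For the same reason $d=2$ is impossible, so $d\ge 3$ and $g\le 2t/d<t$.
\end{itemize}
Hence $\alpha\in T_{2t,g}$ with $g$ a proper divisor of $t$.

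Drop the fallback. The sequences of period $t$ alone number $2^t$, so counting all sequences whose period is a proper divisor of $2t$ cannot give $2^{t/2}$.

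There are also two smaller slips, neither of which affects the result, since you only use orbit size $\ge 2$.
\begin{itemize}
\item The ``So'' in your first paragraph runs \corref{2.2} in the wrong direction; you correct this later.
\item The claim that every orbit in $T_{2t,m}$ has size $2m$ holds only on the exact strata $T_{2t,m}\setminus\bigcup_{k\neq m}T_{2t,k}$ of \thmref{2.9}, not on $T_{2t,m}$ itself. For example, the alternating orbit in $T_{6,3}$ has size $2$.
\end{itemize}
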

\begin{proof}
\begin{enumerate}
\item Using Lemma 2.4 of \cite{combigrowth} we have $$ \lvert \;\mathscr{I}^{np}_{4t}\;\rvert\,=\sum_{s |t }\lvert \;\mathscr{I}^p_{4s}\;\rvert$$ where $s$ are the proper divisors of $t$. Now, using $\corref{2.10}$ in the above expression, 
$$\lvert \;\mathscr{I}^{np}_{4t}\;\rvert\,=\,\sum_{s |t }\left ( \frac{\lvert \,T_{2s,s}\, \rvert- \lvert\,\cup_{k\in W_s^{'s}, k \neq s} T_{2s,k}\,\rvert}{2s}\right)\implies\lvert \;\mathscr{I}^{np}_{4t}\;\rvert\,\leq \,\sum_{s |t } \frac{2^s}{2}.$$
Since the largest proper divisor of $t$ is $t/2$ and there are at most $t/2$ proper divisors, therefore
$$\lvert \;\mathscr{I}^{np}_{4t}\;\rvert\,\leq \,\frac{t}{2} \frac{2^{t/2}}{2} \implies\lvert \;\mathscr{I}^{np}_{4t}\;\rvert\,\leq \,\frac{t}{4}2^{t/2}.$$
Hence, $\lvert \;\mathscr{I}^{np}_{4t}\;\rvert\, \leq\, \frac{t}{4}2^{t/2}.$
  \item from above we have, 
   $$ \lvert \;\mathscr{I}^{np}_{\leq4t}\;\rvert\,=\sum_{n=1}^{t}\,\,\lvert\, \mathscr{I}_{4n}^{np}\,\rvert\,\leq\,\sum_{n=1}^{t}\,\frac{n}{4}2^{n/2}\,\leq\,\sum_{n=1}^{t}\,\frac{t}{4}2^{t/2}\,\leq\,\frac{t^2}{4}2^{t/2}$$
    \end{enumerate}
    Thus, $\lvert \;\mathscr{I}^{np}_{\leq4t}\;\rvert\,\leq\,\frac{t^2}{4}2^{t/2}.$
    \end{proof}
   
\begin{theorem}\label{2.14} As $t \to \infty$, 
   \begin{enumerate}
       \item  $ \lvert\,\mathscr{I}^p_{4t}\,\rvert\, \sim \,\lvert\,\mathscr{I}_{4t}\,\rvert\,\sim\,\frac{2^{t-1}}{t}$, 
       \item $ \lvert\,\mathscr{I}^p_{\leq4t}\,\rvert\, \sim \,\lvert\,\mathscr{I}_{\leq4t}\,\rvert\,\sim\,\frac{2^{t}}{t+1}$. 
       \end{enumerate}
\end{theorem}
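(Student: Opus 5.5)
Since every inert class in $\mathscr{I}_{4t}$ is either primitive or non-primitive, we have the decomposition
\[
\lvert\,\mathscr{I}_{4t}\,\rvert=\lvert\,\mathscr{I}^p_{4t}\,\rvert+\lvert\,\mathscr{I}^{np}_{4t}\,\rvert,
\qquad
\lvert\,\mathscr{I}_{\leq 4t}\,\rvert=\lvert\,\mathscr{I}^p_{\leq 4t}\,\rvert+\lvert\,\mathscr{I}^{np}_{\leq 4t}\,\rvert .
\]
The plan is to show that the non-primitive part is negligible against the known asymptotics of the full count from \thmref{2.12}, using the bounds of \lemref{2.13}. For part (1), I would write
\[
\frac{\lvert\,\mathscr{I}^p_{4t}\,\rvert}{\lvert\,\mathscr{I}_{4t}\,\rvert}=1-\frac{\lvert\,\mathscr{I}^{np}_{4t}\,\rvert}{\lvert\,\mathscr{I}_{4t}\,\rvert},
\]
and bound the subtracted ratio by combining \lemref{2.13}(1), $\lvert\,\mathscr{I}^{np}_{4t}\,\rvert\le \frac{t}{4}2^{t/2}$, with \thmref{2.12}(1), which gives $\lvert\,\mathscr{I}_{4t}\,\rvert\ge \frac12\cdot\frac{2^{t-1}}{t}$ for all large $t$. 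The ratio is then at most a constant times $t^2 2^{-t/2}$, which tends to $0$. Hence $\lvert\,\mathscr{I}^p_{4t}\,\rvert\sim\lvert\,\mathscr{I}_{4t}\,\rvert$, and transitivity of $\sim$ with \thmref{2.12}(1) yields $\frac{2^{t-1}}{t}$.

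Part (2) follows the same pattern. We have $\lvert\,\mathscr{I}^{np}_{\le 4t}\,\rvert\le \frac{t^2}{4}2^{t/2}$ by \lemref{2.13}(2), while $\lvert\,\mathscr{I}_{\le 4t}\,\rvert\sim \frac{2^t}{t+1}$ by \thmref{2.12}(2). The ratio is therefore $O\!\left(t^3 2^{-t/2}\right)\to 0$, and the same subtraction argument gives $\lvert\,\mathscr{I}^p_{\le 4t}\,\rvert\sim\lvert\,\mathscr{I}_{\le 4t}\,\rvert\sim\frac{2^t}{t+1}$.

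The only point needing care is that the lower bounds for the denominators must come from the asymptotic equivalences, not from exact formulas. I would therefore fix $t_0$ beyond which $\lvert\,\mathscr{I}_{4t}\,\rvert\ge \frac{2^{t-1}}{2t}$ (and similarly for the cumulative count), and restrict attention to $t\ge t_0$. One could instead avoid \thmref{2.12} by bounding $\lvert\,\mathscr{I}^p_{4t}\,\rvert$ from below directly via \corref{2.10}, using $\lvert\,T_{2t,t}\,\rvert=2^t$ and removing at most $\sum_{k\mid t,\,k<t}2^k\le t\,2^{t/2}$ sequences. I expect no real obstacle here: the exponential gap between $2^{t}$ and $2^{t/2}$ absorbs all polynomial factors.
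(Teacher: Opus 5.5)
Your argument is correct and is essentially the paper's proof. Both squeeze $\lvert\mathscr{I}_{4t}\rvert-\lvert\mathscr{I}^{np}_{4t}\rvert\le\lvert\mathscr{I}^{p}_{4t}\rvert\le\lvert\mathscr{I}_{4t}\rvert$ (and its cumulative analogue) with the bounds of \lemref{2.13}, and let the gap between $2^{t}$ and $2^{t/2}$ absorb the polynomial factors; the only difference is that you take the lower bound for the denominator from the asymptotics of \thmref{2.12}, whereas the paper uses the explicit inequality $\lvert\mathscr{I}_{4t}\rvert\ge\bigl(2^{t}-\tfrac{t}{2}2^{t/2}\bigr)/(2t)$ from \thmref{2.9}, which avoids fixing a threshold $t_0$.
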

\begin{proof}
\begin{enumerate}
    \item    Using \lemref{2.13}, we have
    $$\lvert\,\mathscr{I}_{4t}\,\rvert - \frac{t}{4}2^{t/2}\leq \lvert\,\mathscr{I}^p_{4t}\,\rvert \leq \lvert\,\mathscr{I}_{4t}\,\rvert \quad\quad  (\,\lvert\,\mathscr{I}^p_{4t}\,\rvert=\lvert\,\mathscr{I}_{4t}\,\rvert-\lvert\,\mathscr{I}^{np}_{4t}\,\rvert \,)$$ 
    Dividing by $\lvert\,\mathscr{I}_{4t}\,\rvert$ and observing that $\lvert\, \mathscr{I}_{4t}\,\rvert\geq \bigg(\frac{2^t-\sum_{k \lvert t, k \neq t}2^k}{2t}\bigg)\geq\frac{2^t-\frac{t}{2}2^\frac{t}{2}}{2t} $, it follows that
    $$1 - \frac{\frac{t}{4}2^{t/2}}{\frac{2^t-\frac{t}{2}2^\frac{t}{2}}{2t}}\leq \frac{\lvert\,\mathscr{I}^p_{4t}\,\rvert}{\lvert\,\mathscr{I}_{4t}\,\rvert} \leq 1 $$
    $$1 - \frac{t^2}{2\cdot(2^\frac{t}{2}-\frac{t}{2})}\leq \frac{\lvert\,\mathscr{I}^p_{4t}\,\rvert}{\lvert\,\mathscr{I}_{4t}\,\rvert} \leq 1$$
    Since $\frac{t^2}{2\cdot(2^\frac{t}{2}-\frac{t}{2})} \to 0$ as $t \to \infty$
    \\
    \\
    Hence,  $\lvert\,\mathscr{I}^p_{4t}\,\rvert\, \sim \lvert\,\mathscr{I}_{4t}\,\rvert\,\sim\,\frac{2^{t-1}}{t}.$
    \item for this part, we use the same arguments as above. We have from \lemref{2.13},
    $$\lvert\,\mathscr{I}_{\leq4t}\,\rvert - \frac{t^2}{4}2^{t/2}\leq \lvert\,\mathscr{I}^p_{\leq4t}\,\rvert \leq \lvert\,\mathscr{I}_{\leq4t}\,\rvert$$
     Dividing by $\lvert\,\mathscr{I}_{\leq4t}\,\rvert$ and observing that $\lvert \,\mathscr{I}_{\leq 4t}\,\rvert \geq \sum_{n=1}^{t}\frac{2^n-\frac{n}{2}2^\frac{n}{2}}{2n} \geq \frac{2^t-\frac{t}{2}2^\frac{t}{2}}{2t} $, it follows that
     $$1 - \frac{\frac{t^2}{4}2^{t/2}}{\frac{2^t-\frac{t}{2}2^\frac{t}{2}}{2t} }\leq \frac{\lvert\,\mathscr{I}^p_{\leq4t}\,\rvert}{\lvert\,\mathscr{I}_{\leq4t}\,\rvert} \leq 1 $$
     $$1 - \frac{t^3}{2\cdot(2^{t/2}-\frac{t}{2})}\leq \frac{\lvert\,\mathscr{I}^p_{4t}\,\rvert}{\lvert\,\mathscr{I}_{4t}\,\rvert} \leq 1$$
     Since $\frac{t^3}{2\cdot(2^{t/2}-\frac{t}{2})} \to 0$ as $t \to \infty$
         \end{enumerate}

     Hence,  $\lvert\,\mathscr{I}^p_{\leq4t}\,\rvert\, \sim \lvert\,\mathscr{I}_{\leq4t}\,\rvert\,\sim\,\frac{2^{t}}{t+1}.$
    \end{proof}

\medskip    We are now in a position to prove \thmref{inert}. 
\subsection{Proof of \thmref{inert}}
\begin{enumerate}
    \item From \thmref{2.12}, we have
\[
\lvert\,\mathscr{I}_{\leq4t}\,\rvert\, \sim \frac{2^{t}}{t+1}
\]
as $t \to \infty$. Since $\mathscr{I}_{\leq 2t}$ corresponds to replacing $t$ by $\lfloor \frac{t}{2} \rfloor$ in the above expression, it follows that 
\[
|\mathscr{I}_{\leq2t}| \sim \frac{2^{\lfloor \frac{t}{2} \rfloor}}{\lfloor \frac{t}{2} \rfloor +1}.
\] 
\item From \thmref{2.14}, we know that 
\[ 
\lvert\,\mathscr{I}^p_{\leq4t}\,\rvert\, \sim \,\lvert\,\mathscr{I}_{\leq4t}\,\rvert
\]
Replacing $4t$ by $2t$ in this asymptotic formula, we therefore obtain
\[
\lvert\,\mathscr{I}^p_{\leq2t}\,\rvert\, \sim \,\lvert\,\mathscr{I}_{\leq2t}\,\rvert
\]
which shows that the number of primitive inert classes of length at most $2t$ is asymptotic to the number of inert conjugacy classes of length at most $2t$. \qed
\end{enumerate}

\section{Combinatorial Structure of Ambiguous Classes}
    In this section, we study the asymptotic growth of ambiguous classes in $\Gamma.$
    \begin {lemma}\label{3.1}
    Let $\{p\}$ be an ambiguous class in $\mathscr{A}_{2t}$, and suppose $\{p\}$ contains a cyclically reduced word of the form
    \[
    p \;=\; AB^{\alpha_1}AB^{\alpha_2}\cdots AB^{\alpha_{t}}, 
    \qquad \alpha_i \in \{\pm 1\}.
    \]
    If $\{p\}$ is fixed by $\phi_{Amb}$, then 
     $$ \alpha_i=\alpha_{t+1-i} \text{ for all i }\in\{1,\dots,t\}.$$
     That is, if t is even,
     \[
    p \;=\;
    AB^{\alpha_1}AB^{\alpha_2}\cdots AB^{\alpha_{\frac{t}{2}}}AB^{\alpha_{\frac{t}{2}}}\cdots AB^{\alpha_2}AB^{\alpha_1} ,
    \]
    if t is odd, 
     \[
    p \;=\;
    AB^{\alpha_1}AB^{\alpha_2}\cdots AB^{\alpha_{\lceil \frac{t}{2}\rceil}}\cdots AB^{\alpha_2}AB^{\alpha_1} ,
    \] where $\lceil \frac{t}{2} \rceil$ denotes the least integer that is greater than or equal to the given number $\frac{t}{2}$. 
    \end{lemma}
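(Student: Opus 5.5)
The plan follows the method of \lemref{2.1}: compute $\phi_{Amb}$ explicitly on an $(AB)$-word, turn "the class is fixed" into a statement about cyclic permutations of the sign sequence, and then normalise by rotating.

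\textbf{Step 1: compute $\phi_{Amb}(p)$.} Since $\phi_{Amb}=\phi_R\circ\phi_w$, I would reuse $\phi_w(A)=A$ and $\phi_w(B)=AB^{-1}A$ from the proof of \lemref{2.1}. These give
\[
\phi_w(p)=B^{-\alpha_1}AB^{-\alpha_2}A\cdots B^{-\alpha_t}A .
\]
Inverting, and using $A^{-1}=A$, yields
\[
\phi_{Amb}(p)=\phi_w(p)^{-1}=AB^{\alpha_t}AB^{\alpha_{t-1}}\cdots AB^{\alpha_1}.
\]
This is again a cyclically reduced $(AB)$-word of length $2t$, and its exponent sequence is the reversal $(\alpha_t,\dots,\alpha_1)$. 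Unlike the inert case, no odd shift is needed.

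\textbf{Step 2: translate "fixed class" into a cyclic symmetry.} By hypothesis $\{p\}$ is fixed by $\phi_{Amb}$, so $p$ and $\phi_{Amb}(p)$ are conjugate cyclically reduced words. They therefore differ by a cyclic permutation. Since both are $(AB)$-words, that permutation is an even shift, i.e.\ a rotation $\beta^k$ of the $AB$-blocks. Hence there is $k$ with $\alpha_{t+1-i}=\alpha_{i-k}$ for all $i$, indices mod $t$. Equivalently, the cyclic sequence $(\alpha_i)$ is invariant under the reflection $\rho_c:i\mapsto c-i \pmod t$, where $c=t+1-k$.

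\textbf{Step 3: rotate so that the reflection becomes $i\mapsto t+1-i$.} Replacing $p$ by $\beta^r(p)$, which does not change $\{p\}$, changes $c$ to $c+2r$. When $t$ is odd, $2$ is invertible mod $t$, so $r$ can always be chosen with $c+2r\equiv t+1$. This gives $\alpha_i=\alpha_{t+1-i}$, the stated palindrome with a middle block $AB^{\alpha_{\lceil t/2\rceil}}$. When $t$ is even, the same adjustment works precisely when $c$ is odd, and it produces $p=AB^{\alpha_1}\cdots AB^{\alpha_{t/2}}AB^{\alpha_{t/2}}\cdots AB^{\alpha_1}$.

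\textbf{Main obstacle: $t$ even with $c$ even.} Here the reflection axis passes through two blocks rather than between them, and no rotation reaches $i\mapsto t+1-i$. A concrete check suggests the lemma as literally stated fails in this case. Take $t=4$ and $(\alpha_i)=(1,1,1,-1)$. Its reversal is a rotation, so the class is ambiguous, yet none of its four rotations satisfies $\alpha_i=\alpha_{5-i}$. I would first re-verify this example. If it holds, I would state the even case as two patterns: the palindrome $\alpha_i=\alpha_{t+1-i}$, or the "vertex-axis" form $\alpha_i=\alpha_{t-i}$. In the latter, $\alpha_t$ and $\alpha_{t/2}$ are unconstrained and the rest is palindromic. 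The later counting in this section would then carry both types, which is consistent with the answer being only $\simeq 2^{t/2}$ rather than an exact asymptotic.
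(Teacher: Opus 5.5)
Your analysis is correct, including the counterexample. It shows that the lemma cannot be proved as stated when $t$ is even; what you prove is the right replacement.

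\textbf{The counterexample.} For $(\alpha_i)=(1,1,1,-1)$ the reversal $(-1,1,1,1)$ is a block rotation, so $\phi_{Amb}(p)$ is conjugate to $p$. In matrices, $AB=T$ and $AB^{-1}=\begin{pmatrix}1&0\\1&1\end{pmatrix}=:L$ in $\mathrm{PSL}(2,\mathbb Z)$. Hence $p=T^3L$, which has trace $5$ and is primitive hyperbolic, and $\phi_{Amb}(p)=LT^3$. Every cyclically reduced $(AB)$-representative of $\{p\}$ is a block rotation of $p$, so the number of exponents equal to $-1$ is a class invariant. A sequence with $\alpha_i=\alpha_{t+1-i}$ and $t$ even has an even number of them, so no representative has the claimed form.

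Conceptually, $\phi_{Amb}$ fixes $T$ and $L$ and reverses words. Ambiguous classes are therefore exactly the reflection-symmetric cyclic $(T,L)$-words, and for even length the reflection axis may pass through two letters. Your two-pattern statement ($\alpha_i=\alpha_{t+1-i}$ or $\alpha_i=\alpha_{t-i}$ after rotation) is the correct version. For odd $t$ your argument proves the lemma in the form ``after a cyclic rotation'', and that qualifier is needed even there: $(1,1,-1)$ is ambiguous but is not itself a palindrome.

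\textbf{Comparison with the paper.} The paper computes $\phi_{Amb}(p)=AB^{\alpha_t}\cdots AB^{\alpha_1}$ as you do. It works directly from $\phi_{Amb}(A)=A$, $\phi_{Amb}(B)=ABA$ and the anti-homomorphism property, rather than through $\phi_R\circ\phi_w$. It then asserts that the only cyclic shift identifying $p$ with $\phi_{Amb}(p)$ is by $\lceil t/2\rceil$ blocks, and reads off $\alpha_i=\alpha_{t+1-i}$. That step is unjustified, for two reasons:
\begin{enumerate}
\item the shift depends on which representative is chosen;
\item a shift by $\lceil t/2\rceil$ would give $\alpha_{t+1-i}=\alpha_{i-\lceil t/2\rceil}$, which is not a palindrome condition.
\end{enumerate}
Your bookkeeping of the reflection centre $c$ supplies exactly what is missing: rotation moves $c$ by $2r$, so for even $t$ it cannot change the parity of $c$.

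One small precision. If the sequence is invariant under rotation by an odd number of blocks, it has reflection centres of both parities. So ``the adjustment works precisely when $c$ is odd'' should read ``works iff some centre is odd''. Your final either/or statement is unaffected.

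\textbf{Consequences for the counting.} Your expectation here is also right. The vertex-axis pattern alone accounts for $2^{t/2+1}$ sequences. So the even-$t$ value $2^{t/2-1}$ in Lemma~\ref{3.3} is an undercount: for $t=4$, the classes of $T^3L$, $TL^3$ and $(TL)^2$ are ambiguous and have no palindromic representative. The bound $|\mathscr A_{2n}|\le 2^{\lceil n/2\rceil}$ therefore fails for even $n$. Replacing it by $|\mathscr A_{2n}|\le 3\cdot 2^{n/2}$ still gives $|\mathscr A_{\le 2t}|\simeq 2^{t/2}$.
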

    \begin{proof}
    Suppose $\{p\}\in \mathscr{A}_{2t}$ and let
$$
p \;\longleftrightarrow\;
\begin{pmatrix}
a & b\\
c & d
\end{pmatrix},
\qquad ad-bc=1,\quad |a+d|>2.
$$
Then
$$
\phi_{Amb}(p)=w^{-1}p^{-1}w
\;\longleftrightarrow\;
\begin{pmatrix}
d & b \\
c & a
\end{pmatrix}.
$$

Moreover,
$$
\phi_{Amb}(A)
\;\longleftrightarrow\;
\phi_{Amb}\!\left(
\begin{pmatrix}
0 & -1\\
1 & 0
\end{pmatrix}\right)
=
\begin{pmatrix}
0 & -1\\
1 & 0
\end{pmatrix}
\;\longleftrightarrow\; A,
$$
and
$$
\phi_{Amb}(B)
\;\longleftrightarrow\;
\phi_{Amb}\!\left(
\begin{pmatrix}
0 & -1\\
1 & 1
\end{pmatrix}\right)
=
\begin{pmatrix}
1 & -1\\
1 & 0
\end{pmatrix}
\;\longleftrightarrow\; ABA.
$$
Consequently,
$$
\phi_{Amb}(A)=A
\qquad\text{and}\qquad
\phi_{Amb}(B)=ABA.
$$
Every element $\gamma \in W$, which is not elliptic, is conjugate to a $(AB)$-word. Hence, each conjugacy class in $\mathscr{A}_{2t}$ contains a representative of the form
\[
p = AB^{\alpha_1} AB^{\alpha_2} \cdots AB^{\alpha_{t}}, \qquad \alpha_i \in \{\pm 1\}.
\]
Since $\phi_{Amb}$ is an anti-homomorphism, thus
$$
\phi_{Amb}(p) = \phi_{Amb}(B^{\alpha_{t}})\phi_{Amb}( A)\cdots  \phi_{Amb}(B^{\alpha_2})\phi_{Amb}(A)\phi_{Amb}(B^{\alpha_1})\phi_{Amb}(A)$$
$$\imp \phi_{Amb}(p)= AB^{\alpha_{t}} \cdots AB^{\alpha_2} AB^{\alpha_1}.
$$
Since $\{p\}\in \mathscr{A}_{2t}$, the conjugacy class $\{p\}$ is invariant by $\phi_{Amb}$. Both words are cyclically reduced. In a free product of two cyclic groups, two cyclically reduced words are conjugate if and only if one is a cyclic permutation of the other. Hence, if $p$ is conjugate to $\phi_{Amb}(p)$, then $\phi_{Amb}(p)$ must be obtained from $p$ by cyclic permutation. Any cyclic permuation that identifies $p$ with $\phi_{Amb}(p)$ must place the initial $A$ of $\phi_{Amb}(p)$ at the beginning of one of these blocks in $p$. Since all exponents in $\phi_{Amb}(p)$ are exactly the reverse of exponents in $p$, the only possible cyclic shift is by exactly $\lceil\frac{t}{2}\rceil$ blocks where $\lceil \frac{t}{2} \rceil$ denotes the least integer that is greater than or equal to the given number $\frac{t}{2}$. This implies that the exponent sequence is symmetric,
\[
 \alpha_i=\alpha_{t+1-i}  \quad \text{for all } i\in \{1,\dots,t\}.
\]
Hence, if $p$ is conjugate to $\phi_{Amb}(p)$, the exponents must satisfy this symmetry condition.
 Therefore, we obtain that $$ \alpha_i=\alpha_{t+1-i} \;\;\text{ for all } ~i\in\{1,2,\dots,t\}.$$\\ This implies if $t$ is even,
     \[
    p \;=\;
    AB^{\alpha_1}AB^{\alpha_2}\cdots AB^{\alpha_{\frac{t}{2}}}AB^{\alpha_{\frac{t}{2}}}\cdots AB^{\alpha_2}AB^{\alpha_1} ,
    \]
    if $t$ is odd, 
     \[
    p \;=\;
    AB^{\alpha_1}AB^{\alpha_2}\cdots AB^{\alpha_{\lceil \frac{t}{2}\rceil}}\cdots AB^{\alpha_2}AB^{\alpha_1}.
    \] 
    This completes the proof. 
\end{proof}
 \vspace{2em}
 \n 
 Now we define the set $\mathcal{U}_{2t}=\{ AB^{\alpha_1}AB^{\alpha_2}....AB^{\alpha_{t}} \mid \alpha_i= \alpha_{t+1-i} \; \text{for all $i$, and} \; \alpha_i=\pm1\}.$
\begin{lemma}\label{3.2}
    For any ambiguous element in \;$\mathcal{U}_{2t}$,
    \begin{enumerate}
        \item If $t$ is even, each ambiguous class consists of exactly two elements, represented by the words $$p=AB^{\alpha_1}AB^{\alpha_2}\cdots AB^{\alpha_{\frac{t}{2}}}AB^{\alpha_\frac{t}{2}}\cdots AB^{\alpha_2}AB^{\alpha_1}$$ and $$p'=AB^{\alpha_{\frac{t}{2}}}\cdots AB^{\alpha_{2}}AB^{\alpha_1}AB^{\alpha_1}AB^{\alpha_2}\cdots AB^{\alpha_{\frac{t}{2}}}.$$
        \item If t is odd, each ambiguous class consists of exactly one element, represented by the word $$p=AB^{\alpha_1}AB^{\alpha_2}\cdots AB^{\alpha_{\lceil \frac{t}{2}\rceil}}\cdots AB^{\alpha_2}AB^{\alpha_1}.$$
        \end{enumerate}
\end{lemma}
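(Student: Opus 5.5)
We work with the exponent sequences, as in Section 2. Lemma~\ref{3.1} tells us which representatives lie in $\mathcal{U}_{2t}$. By the remark in the introduction, the $(AB)$-words conjugate to $p=AB^{\alpha_1}\cdots AB^{\alpha_t}$ are exactly its even cyclic shifts. On the level of $(\alpha_1,\dots,\alpha_t)$ these are the block rotations $\rho^r$, $\alpha'_i=\alpha_{i-r}$ with indices mod $t$, for $0\le r<t$. So the ambiguous class of $p$ meets $\mathcal{U}_{2t}$ exactly in those rotations $\rho^r(\alpha)$ that are again palindromic. The plan is to determine all $r$ with $\rho^r(\alpha)$ palindromic, given that $\alpha$ itself is palindromic.

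\textbf{Key computation.} Let $\sigma$ be the reversal $\sigma(\alpha)_i=\alpha_{t+1-i}$. By hypothesis $\sigma(\alpha)=\alpha$. A direct index computation gives $\sigma\rho^r=\rho^{-r}\sigma$. Hence $\rho^r(\alpha)$ is palindromic if and only if
\[
\rho^{r}(\alpha)=\sigma\rho^r(\alpha)=\rho^{-r}\sigma(\alpha)=\rho^{-r}(\alpha),
\]
that is, if and only if $\rho^{2r}(\alpha)=\alpha$. Now assume $\alpha$ has trivial rotational stabilizer, which is the case for the words relevant in the count (primitive $p$, cf.\ Corollary~\ref{2.2} and Lemma~2.4 of \cite{combigrowth}). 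Then $\rho^{2r}(\alpha)=\alpha$ forces $2r\equiv 0\pmod t$.

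\textbf{Case split.} If $t$ is odd, $2r\equiv0\pmod t$ gives only $r=0$. So the class meets $\mathcal{U}_{2t}$ in the single word $p$, which is part (2). If $t$ is even, the solutions are $r=0$ and $r=t/2$. Rotation by $t/2$ blocks sends
\[
(\alpha_1,\dots,\alpha_{t/2},\alpha_{t/2},\dots,\alpha_1)\mapsto(\alpha_{t/2},\dots,\alpha_1,\alpha_1,\dots,\alpha_{t/2}),
\]
which is exactly the word $p'$ in the statement. Finally, $p'\neq p$ because $\rho^{t/2}$ is a nontrivial rotation and $\alpha$ has trivial rotational stabilizer. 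This gives exactly two elements and proves part (1).

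\textbf{Main obstacle.} The delicate point is the non-primitive case, where the rotational stabilizer of $\alpha$ is nontrivial. There, extra solutions $r$ of $\rho^{2r}(\alpha)=\alpha$ can give further palindromic rotations; for example, the constant sequence is fixed by every rotation. Also $p'$ may coincide with $p$. I would handle this first by stating the count for classes with trivial stabilizer (the primitive ones), which is all that the asymptotic Theorem~\ref{ambiguous} needs. The non-primitive classes would then be treated by the same divisor-reduction trick as in Lemma~\ref{2.13}. Their contribution is $O(t\,2^{t/4})$ and is negligible.
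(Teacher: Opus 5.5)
Your argument is correct wherever the statement is true, and it is an improvement on the paper's. The paper follows the same outline: it looks for the block rotations of a palindrome that are again palindromes. But it simply asserts that only the rotations by $0$ and $t/2$ blocks qualify. Your identity $\sigma\rho^r=\rho^{-r}\sigma$ reduces palindromicity of $\rho^r(\alpha)$ to $\rho^{2r}(\alpha)=\alpha$. That is the step which actually proves the claim, and it also shows that the claim needs primitivity. You are right that part (1) fails without it, so the statement as written is false. For $t=6$, take the hyperbolic word $p=(AB\,AB^{-1}\,AB)^2$, with exponent sequence $(1,-1,1,1,-1,1)$. Then $p'=p$ and no other block rotation is palindromic, so its class meets $\mathcal{U}_{12}$ in a single element. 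The paper's proof does not notice this.

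You can make the non-primitive case exact instead of deferring it. Let $d$ be the least period of $\alpha$; your criterion then reads $d\mid 2r$.
\begin{itemize}
\item For $t$ odd, $d$ is odd, so $d\mid 2r$ forces $\rho^r(\alpha)=\alpha$. Hence part (2) holds for every element of $\mathcal{U}_{2t}$, and your primitivity assumption is unnecessary there.
\item For $t$ even, the palindromic rotations are $\{\alpha\}$ if $d$ is odd and $\{\alpha,\rho^{d/2}(\alpha)\}$ if $d$ is even. Moreover $\rho^{d/2}(\alpha)=p'$ exactly when $t/d$ is odd. So part (1) holds precisely when $d$ and $t$ have the same $2$-adic valuation.
\end{itemize}
In particular a class never contains more than two palindromes. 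In the non-primitive case the count drops to one, or the second palindrome is not $p'$; no ``further'' palindromes appear. It also follows that the number of classes meeting $\mathcal{U}_{2t}$ lies between $\tfrac12|\mathcal{U}_{2t}|$ and $|\mathcal{U}_{2t}|$, with no divisor reduction needed.

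Two smaller points.
\begin{itemize}
\item The fact you need is that a cyclically reduced $(AB)$-word is a proper power iff its exponent sequence has a period properly dividing $t$. \corref{2.2} concerns inert classes and does not supply this.
\item Your proof never uses \lemref{3.1}, which is good, because that lemma is false for even $t$. The word $AB\,AB\,AB\,AB^{-1}$ is ambiguous, since $(1,1,1,-1)$ reversed is a rotation of itself, yet none of its block rotations is palindromic. So your closing claim that primitive elements of $\mathcal{U}_{2t}$ are all that \thmref{ambiguous} needs is too optimistic.
\end{itemize}
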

\begin{proof}
 Consider \(p \in \mathcal{U}_{2t}\) as an ambiguous element (i.e., \(p\) is conjugate to its reverse word). For cyclically reduced \((AB)\)-words, conjugation is equivalent to taking a cyclic shift of the word.\n Let $t$ be even. Consider the word
  $$
  AB^{\alpha_1} AB^{\alpha_2} \cdots AB^{\alpha_{\frac{t}{2}}} AB^{\alpha_{\frac{t}{2}}} \cdots AB^{\alpha_2} AB^{\alpha_1}.
  $$
  We can represent it by the finite sequence 
  \((\alpha_1, \alpha_2, \ldots, \alpha_{\frac{t}{2}}, \alpha_{\frac{t}{2}}, \ldots, \alpha_2, \alpha_1)\), which is palindromic because it reads the same forward and backward. The conjugacy class of this word contains all its cyclic shifts. However, to remain in the ambiguous permutations of $\alpha_i$'s, we consider only those cyclic permutations that preserve the palindromic structure. Thus, only two the cyclic permutations that give palindromic words are included as elements in the ambiguous class.
   \\Now consider all the cyclic permutations of the finite sequence. Among the $t$ possible cyclic shifts, only the shift by the $0$ positions and the shift by the $\frac{t}{2}$ positions produce palindromic words. All other cyclic shifts break the symmetry and give non-palindromic words. Therefore, within the ambiguous class, there are exactly two palindromic representatives, namely the original word $$p=AB^{\alpha_1}AB^{\alpha_2}\cdots AB^{\alpha_{\frac{t}{2}}}AB^{\alpha_\frac{t}{2}}\cdots AB^{\alpha_2}AB^{\alpha_1}$$ and $$p'=AB^{\alpha_{\frac{t}{2}}}\cdots AB^{\alpha_{2}}AB^{\alpha_1}AB^{\alpha_1}AB^{\alpha_2}\cdots AB^{\alpha_{\frac{t}{2}}}.$$ No other cyclic shift gives a new palindromic representative.
   
   When $t$ is odd, the same argument as above shows that each ambiguous class consists of exactly one palindromic element, represented by the word
   $$p=AB^{\alpha_1}AB^{\alpha_2}\cdots AB^{\alpha_{\lceil \frac{t}{2}\rceil}}\cdots AB^{\alpha_2}AB^{\alpha_1}.$$
This establishes the lemma. 
\end{proof}

\medskip
\begin{lemma}\label{3.3} 

\begin{enumerate}
    \item $ \lvert\, \mathscr{A}_{ 2t}\, \rvert =
\begin{cases}
2^{\frac{t}{2}-1}, & \text{if } t \text{ is even},\\[6pt]
2^{\lceil \frac{t}{2} \rceil}, & \text{if } t \text{ is odd}.
\end{cases}
$
$\hangindent=2em$
    \item $ \lvert\,\mathscr A_{\leq 2t}\,\rvert\simeq2^{t/2}$.
\end{enumerate}
\end{lemma}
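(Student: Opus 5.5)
We will follow the paper's own route. By \lemref{3.1}, every class in $\mathscr{A}_{2t}$ has a representative in $\mathcal{U}_{2t}$. By \lemref{3.2}, each such class meets $\mathcal{U}_{2t}$ in exactly two words when $t$ is even and in exactly one word when $t$ is odd. So part (1) reduces to counting $\mathcal{U}_{2t}$ and dividing by the number of palindromic representatives per class.

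\textbf{Counting palindromes.} A word in $\mathcal{U}_{2t}$ is determined by its exponent sequence $(\alpha_1,\dots,\alpha_t)$ subject to $\alpha_i=\alpha_{t+1-i}$. The free coordinates are therefore $\alpha_1,\dots,\alpha_{\lceil t/2\rceil}$, which gives
\[
\lvert\,\mathcal{U}_{2t}\,\rvert = 2^{\lceil t/2\rceil}.
\]
The map from $\mathcal{U}_{2t}$ to ambiguous classes is surjective by \lemref{3.1}.

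\textbf{Passing to classes.}
\begin{itemize}
\item For $t$ even, the fibres have size $2$ by \lemref{3.2}(1), namely $p$ and its half-shift $p'$. This yields $2^{t/2}/2=2^{t/2-1}$.
\item For $t$ odd, the fibres are singletons by \lemref{3.2}(2). This yields $2^{\lceil t/2\rceil}$.
\end{itemize}
In both cases we are taking the statements of \lemref{3.2} as given, in particular the claim that $p\neq p'$ for every palindrome when $t$ is even. I will quote that lemma directly rather than re-derive it. The main care point is making sure the quotient really is by these fibres: I check that distinct classes meet $\mathcal{U}_{2t}$ in disjoint sets, which is immediate because conjugacy classes are disjoint.

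\textbf{Part (2).} Sum part (1) over lengths $2n$ with $n\le t$:
\[
\lvert\,\mathscr{A}_{\le 2t}\,\rvert=\sum_{n\le t,\ n \text{ even}}2^{n/2-1}+\sum_{n\le t,\ n \text{ odd}}2^{(n+1)/2}.
\]
Both sums are geometric series with ratio $2$ in the exponent index $\lceil n/2\rceil$. Each is therefore bounded above and below by constant multiples of its top term, up to an additive constant.

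The top terms are of order $2^{t/2}$ or $2^{(t+1)/2}=\sqrt2\,2^{t/2}$, depending on the parity of $t$. Concretely, the total lies between $2^{\lfloor t/2\rfloor-1}$ and $4\cdot 2^{\lceil t/2\rceil}$. Both of these are within constant factors of $2^{t/2}$, since $2^{\lfloor t/2\rfloor}\ge 2^{t/2}/\sqrt2$ and $2^{\lceil t/2\rceil}\le \sqrt2\,2^{t/2}$.

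This gives $C_1 2^{t/2}\le \lvert\,\mathscr A_{\le 2t}\,\rvert\le C_2 2^{t/2}$, i.e. $\lvert\,\mathscr A_{\le 2t}\,\rvert\simeq 2^{t/2}$. We do not expect $\sim$ here, because the parity of $t$ changes the leading constant; this is why the statement uses $\simeq$. The only step requiring real attention is the parity bookkeeping in these bounds, which is routine.
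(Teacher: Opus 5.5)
This is the paper's own argument, and it breaks exactly where the paper's proof does. For even $t$, both facts you import from \lemref{3.1} and \lemref{3.2} are false, and so is the formula $2^{t/2-1}$.

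Since $\phi_{Amb}(AB^{\alpha_1}\cdots AB^{\alpha_t})=AB^{\alpha_t}\cdots AB^{\alpha_1}$ and conjugacy of $(AB)$-words is block rotation, $\{p\}$ is ambiguous iff the cyclic sequence $(\alpha_1,\dots,\alpha_t)$ is fixed by some reflection $i\mapsto c-i$ of $\mathbb{Z}/t$. Rotating the representative changes $c$ by an even amount.

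\begin{itemize}
\item For odd $t$, every $c$ can be moved to $t+1$, and a reflection-invariant necklace has exactly one rotation with $\alpha_i=\alpha_{t+1-i}$. So your odd case $2^{\lceil t/2\rceil}$ is correct.
\item For even $t$, the parity of $c$ is invariant, and the reflections with $c$ even (axis through two blocks, e.g.\ $i\mapsto 2-i$) are invisible to \lemref{3.1}. For example, $ABABABAB^{-1}$, with exponents $(1,1,1,-1)$, is ambiguous because its reversal $(-1,1,1,1)$ is a rotation. Yet no rotation of it is a palindrome, so $\mathcal{U}_{2t}\to\mathscr{A}_{2t}$ is not surjective.
\item The fibres are not all of size two either: $p'=p$ whenever the exponent sequence has period dividing $t/2$, e.g.\ $(AB)^t$. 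This is the very point you flagged and then deferred.
\end{itemize}

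For $t=4$, all six binary necklaces $(1,1,1,1)$, $(-1,-1,-1,-1)$, $(1,1,-1,-1)$, $(1,1,1,-1)$, $(1,-1,-1,-1)$, $(1,-1,1,-1)$ give ambiguous classes (four of them hyperbolic), against the claimed $2$. In general, with the convention implicit in $|\mathcal{U}_{2t}|=2^{\lceil t/2\rceil}$, the even-$t$ count is the number of reflection-symmetric binary necklaces, $3\cdot 2^{t/2-1}$. So part (1) is false for even $t$, and no proof can establish it as stated; it must be corrected.

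Part (2) is still true, but you should not derive it from the even-$t$ formula. Every ambiguous class of length $2n$ has a representative fixed by $i\mapsto n+1-i$, or, for even $n$, by $i\mapsto 2-i$. These fix $2^{\lceil n/2\rceil}$ and $2^{n/2+1}$ sequences respectively, so $|\mathscr{A}_{2n}|\le 3\cdot 2^{\lceil n/2\rceil}$ and hence $|\mathscr{A}_{\le 2t}|=O(2^{t/2})$. The lower bound $|\mathscr{A}_{\le 2t}|\ge 2^{t/2}$ comes from the odd $n\le t$, where your count is correct. Your explicit upper bound $4\cdot 2^{\lceil t/2\rceil}$ does not survive with the true even counts: for $t=2m$ the total is $5\cdot 2^{m}-5$. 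The relation $\simeq$ still holds, just with a larger constant.
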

\begin{proof}
   \begin {enumerate}
   \item First we consider the case of even $t$. Then there are $2^\frac{t}{2}$ words of the form 
$$
p = AB^{\alpha_1} AB^{\alpha_2} \cdots AB^{\alpha_\frac{t}{2}} AB^{\alpha_\frac{t}{2}} \cdots AB^{\alpha_2} AB^{\alpha_1}
$$
in $\mathcal{U}_{2t}$, we have $\lvert \,\mathcal{U}_{2t} \,\rvert = 2^\frac{t}{2}$.  
From the above \lemref{3.2}, we know that each such word has exactly two conjugates. Therefore, the number of distinct conjugacy classes is
$$
\left\lvert\, \mathcal{U}_{2t}/\langle \delta \rangle\, \right\rvert = \frac{2^\frac{t}{2}}{2}. 
$$ Since \,$\mathcal{U}_{2t}/\langle \delta \rangle = \mathscr{A}_{2t}$, it implies $\lvert\, \mathscr{A}_{4t}\, \rvert = {2^{t-1}}.$
\\If t is odd, there are $2^{\lceil\frac{t}{2}\rceil}$ words of the form 
$$
p = AB^{\alpha_1} AB^{\alpha_2} \cdots AB^{\alpha_{\lceil\frac{t}{2}\rceil}} \cdots AB^{\alpha_2} AB^{\alpha_1}
$$
in $\mathcal{U}_{2t}$. From the above \lemref{3.2}, we know that each such word has exactly one element in it's conjugacy class. Therefore, the number of distinct conjugacy classes is
$$
\left\lvert\, \mathcal{U}_{2t}/\langle \delta \rangle\, \right\rvert = \lvert\, \mathscr{A}_{2t}\,\rvert= 2^{\lceil\frac{t}{2}\rceil}.$$ 
  \item The second one follows: 
    We have
$$
\lvert \,\mathscr A_{\leq 2t}\,\rvert=\sum_{n=1}^{t}\lvert\, \mathscr A_{2n}\,\rvert.
$$
For every \(n\),
$$
\lvert\, \mathscr A_{2n}\,\rvert\le 2^{\lceil n/2\rceil}\le 2^{(n+1)/2}.
$$
Hence
$$
\lvert \,\mathscr A_{\leq 2t}\,\rvert\leq \sum_{n=1}^{t}2^{(n+1)/2}
\;=\;
\sum_{n=1}^{t} {(\sqrt2)}^{\,n+1}$$ 
$$
\sum_{n=1}^{t} (\sqrt{2})^{\,n+1}
=
{2}\sum_{n=1}^{t} (\sqrt{2})^{\,n-1}
\
=
{2}\,\frac{(\sqrt{2})^{t}-1}{\sqrt{2}-1}
=
2(\sqrt{2}+1)\big(2^{t/2}-1\big).$$
That means $$
\lvert \,\mathscr A_{\leq2t}\,\rvert\le 2(\sqrt{2}+1)\big(2^{t/2}-1\big) .
$$
\\
On the other hand,
$$
\lvert \,\mathscr A_{\leq 2t} \,\rvert
\geq \sum_{\substack{1\leq n\leq t\\ n\ \mathrm{odd}}}\lvert\, \mathscr A_{2n}\, \rvert = \sum_{\substack{1\leq n\leq t\\ n\ \mathrm{odd}}}2^{\lceil n/2 \rceil}\geq2^{\lceil t/2\rceil}\geq 2^{t/2}.
$$

Therefore, from the above inequalities, we have 
\[\lvert \,\mathscr A_{\leq 2t} \,\rvert\simeq 2^{\frac{t}{2}}.\]

 \end{enumerate}

This completes the proof. \end{proof}

\begin{lemma}\label{3.4}
    $\hangindent=2em$
\begin{enumerate}
    \item $\lvert \;\mathscr{A}^{np}_{2t}\;\rvert\, \leq\, \frac{t}{2} 2^{\frac{t+2}{4}}$
    \item $\lvert \;\mathscr{A}^{np}_{\leq2t}\;\rvert\, \leq\, \frac{t^2}{2}2^{\frac{t+2}{4}}$
\end{enumerate}
\end{lemma}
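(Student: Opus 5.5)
We follow the template of \lemref{2.13}. A non-primitive ambiguous class of length $2t$ is represented by an $(AB)$-word $p=q^{k}$ with $k\ge 2$, where $q$ is a primitive $(AB)$-word of length $2s$ and $s=t/k$ is a proper divisor of $t$. As in Lemma 2.4 of \cite{combigrowth}, the class $\{p\}$ determines the class $\{q\}$ of its root uniquely. The first thing to check is that $q$ is itself ambiguous. Since $\phi_{Amb}$ is an anti-automorphism, we have $\phi_{Amb}(q^k)=\phi_{Amb}(q)^k$. Roots are unique up to conjugacy in $\Gamma$, so invariance of $\{q^k\}$ forces invariance of $\{q\}$. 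This gives an injection of $\mathscr{A}^{np}_{2t}$ into $\bigcup_{s\mid t,\,s\neq t}\mathscr{A}^{p}_{2s}$, and hence
\[
\lvert\,\mathscr{A}^{np}_{2t}\,\rvert\le \sum_{s\mid t,\ s\neq t}\lvert\,\mathscr{A}_{2s}\,\rvert .
\]

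Next we bound each summand using \lemref{3.3}(1). In both parity cases $\lvert\mathscr{A}_{2s}\rvert\le 2^{\lceil s/2\rceil}\le 2^{(s+1)/2}$. Every proper divisor satisfies $s\le t/2$, so $(s+1)/2\le (t+2)/4$. Each term is therefore at most $2^{\frac{t+2}{4}}$. The number of proper divisors of $t$ is at most $t/2$, which gives
\[
\lvert\,\mathscr{A}^{np}_{2t}\,\rvert\le \frac{t}{2}\,2^{\frac{t+2}{4}},
\]
and this is part (1).

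For part (2), we sum part (1) over $n=1,\dots,t$. Each term satisfies $\frac{n}{2}2^{\frac{n+2}{4}}\le \frac{t}{2}2^{\frac{t+2}{4}}$, and there are $t$ terms, so
\[
\lvert\,\mathscr{A}^{np}_{\le 2t}\,\rvert\le t\cdot \frac{t}{2}2^{\frac{t+2}{4}}=\frac{t^2}{2}2^{\frac{t+2}{4}} .
\]

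The main obstacle is the reduction step, namely that an ambiguous non-primitive class has an ambiguous root of length $2s$ with $s$ a proper divisor of $t$. This rests on two facts: conjugacy of cyclically reduced words in $\mathbb{Z}_2*\mathbb{Z}_3$ is cyclic permutation, and roots are unique in the free product. Since we only need an upper bound, we can sidestep the issue if necessary. The map $\{p\}\mapsto\{q\}$ into all classes of length $2s$ is injective regardless, so it suffices to count the possible roots. If the ambiguity of $q$ proves awkward, we would instead bound the number of palindromic root words of length $2s$ directly by $2^{\lceil s/2\rceil}$, using the symmetry $\alpha_i=\alpha_{t+1-i}$ of \lemref{3.1} together with the observation that this symmetry of $q^k$ restricts to its first period.
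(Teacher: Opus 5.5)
Your proposal is correct relative to the paper's earlier lemmas and is essentially the paper's own proof. Both arguments bound $\lvert\mathscr{A}^{np}_{2t}\rvert$ by $\sum_{s\mid t,\,s\neq t}\lvert\mathscr{A}_{2s}\rvert$, use \lemref{3.3}(1) to bound each term by $2^{(s+1)/2}\le 2^{\frac{t+2}{4}}$, count at most $t/2$ proper divisors, and sum over $n\le t$ for part (2); the only difference is that you justify the passage to an ambiguous primitive root yourself (via $\phi_{Amb}(q^k)=\phi_{Amb}(q)^k$ and uniqueness of roots of hyperbolic elements) and use an injection, where the paper cites Lemma~2.4 of \cite{combigrowth} for an equality.

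Be aware, though, that both arguments (and your palindrome fallback) inherit a defect of Lemmas~\ref{3.1}--\ref{3.3} for even $s$: the class of $ABABABAB^{-1}$, with exponent sequence $(1,1,1,-1)$, is ambiguous because its reversal $(-1,1,1,1)$ is a rotation, yet no rotation satisfies $\alpha_i=\alpha_{5-i}$; in fact for even $s$ there are $3\cdot 2^{s/2-1}$ ambiguous $(AB)$-classes of length $2s$, so the per-term bound $2^{\lceil s/2\rceil}$ fails in general, although the inequality of \lemref{3.4} itself still holds if the top divisor $s=t/2$ is treated with this correct count.
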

\begin{proof}
    \begin{enumerate}
        \item  Using \cite[Lemma 2.4]{combigrowth}, we have
    $$ \lvert \;\mathscr{A}^{np}_{2t}\;\rvert\,=\sum_{s |t }\lvert \;\mathscr{A}^p_{2s}\;\rvert$$
    where the sum ranges over all proper divisors of $t$.   
$$\lvert \,\mathscr{A}^{np}_{2t}\,\rvert\,=\,\sum_{s |t }\lvert \,\mathscr{A}^p_{2s}\,\rvert\,\leq\, \sum_{s |t }\lvert \,\mathscr{A}_{2s}\,\rvert.$$
From \lemref{3.3}, $$\lvert \;\mathscr{A}^{np}_{2t}\;\rvert\,\leq \,\sum_{s |t }2^{\lceil s/2 \rceil} \,\leq\,\sum_{s |t }2^{\frac{s+1}{2}}. $$
Since the largest proper divisor of $t$ is $t/2$ and there are at most $t/2$ proper divisors, therefore
$$\lvert \;\mathscr{I}^{np}_{2t}\;\rvert\,\leq \,\frac{t}{2} 2^{\frac{t/2+1}{2}} = \frac{t}{2} 2^{\frac{t+2}{4}}\imp\lvert \;\mathscr{I}^{np}_{2t}\;\rvert\,\leq \,\frac{t}{2} 2^{\frac{t+2}{4}} . $$
Hence, $\lvert \;\mathscr{A}^{np}_{2t}\;\rvert\, \leq\, \frac{t}{2} 2^{\frac{t+2}{4}}.$
\item From above we have, 
   $$ \lvert \;\mathscr{A}^{np}_{\leq2t}\;\rvert\,=\sum_{n=1}^{t}\,\,\lvert\, \mathscr{A}_{2n}^{np}\,\rvert\,\leq\,\sum_{n=1}^{t}\,\frac{n}{2}2^{\frac{n+2}{4}}\,\leq\,\sum_{n=1}^{t}\,\frac{t}{2}2^{\frac{t+2}{4}}\,\leq\,\frac{t^2}{2}2^{\frac{t+2}{4}}.$$ \end{enumerate}

   Therefore, we have $\lvert \;\mathscr{A}^{np}_{\leq2t}\;\rvert\,\leq\,\frac{t^2}{2}2^{\frac{t+2}{4}}.$
\end{proof}
\begin{lemma}\label{3.5} As $t \to \infty$, 
   \begin{enumerate}
       \item  $ \lvert\,\mathscr{A}^p_{2t}\,\rvert\, \sim \,\lvert\,\mathscr{A}_{2t}\,\rvert$. 
       \item $ \lvert\,\mathscr{A}^p_{\leq2t}\,\rvert\, \sim \,\lvert\,\mathscr{A}_{\leq2t}\,\rvert$. 
       \end{enumerate}
\end{lemma}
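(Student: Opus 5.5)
The proof mirrors the argument for \thmref{2.14}. We sandwich the primitive count between the full count and the full count minus the non-primitive count, and then show that the non-primitive contribution is negligible. The basic identity is $\lvert\mathscr{A}^p_{2t}\rvert=\lvert\mathscr{A}_{2t}\rvert-\lvert\mathscr{A}^{np}_{2t}\rvert$, so
\[
1-\frac{\lvert\mathscr{A}^{np}_{2t}\rvert}{\lvert\mathscr{A}_{2t}\rvert}\le \frac{\lvert\mathscr{A}^p_{2t}\rvert}{\lvert\mathscr{A}_{2t}\rvert}\le 1 .
\]
The same sandwich holds with $\le 2t$ in place of $2t$. It therefore suffices to show that the two ratios $\lvert\mathscr{A}^{np}_{2t}\rvert/\lvert\mathscr{A}_{2t}\rvert$ and $\lvert\mathscr{A}^{np}_{\le 2t}\rvert/\lvert\mathscr{A}_{\le 2t}\rvert$ tend to $0$.

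For part (1), I bound the numerator using \lemref{3.4}(1), which gives $\lvert\mathscr{A}^{np}_{2t}\rvert\le \frac{t}{2}2^{(t+2)/4}$. The denominator comes from the exact formula in \lemref{3.3}(1): $\lvert\mathscr{A}_{2t}\rvert\ge 2^{t/2-1}$ for every $t$, even or odd. The ratio is then at most $t\,2^{(t+2)/4}/2^{t/2}=2^{1/2}\,t\,2^{-t/4}$, which tends to $0$ because the exponential decay dominates the linear factor.

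For part (2), \lemref{3.4}(2) gives $\lvert\mathscr{A}^{np}_{\le 2t}\rvert\le \frac{t^2}{2}2^{(t+2)/4}$. The proof of \lemref{3.3}(2) gives the lower bound $\lvert\mathscr{A}_{\le 2t}\rvert\ge 2^{t/2}$. The ratio is therefore at most $\frac{t^2}{2}2^{1/2}2^{-t/4}\to 0$, and the same squeeze finishes the argument.

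The only delicate point is the lower bound on $\lvert\mathscr{A}_{2t}\rvert$ in part (1). Because of the parity split in \lemref{3.3}, the asymptotic statement must be justified uniformly over both even and odd $t$. The remedy is to use the weaker common bound $2^{t/2-1}$, which is valid in both cases. This still beats the $2^{t/4}$ growth of the non-primitive classes, which is the source of the decay.
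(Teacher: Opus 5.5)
Your proposal is correct and follows the paper's own proof of this lemma. The paper uses the same three ingredients: the sandwich $\lvert\mathscr{A}\rvert-\lvert\mathscr{A}^{np}\rvert\le\lvert\mathscr{A}^p\rvert\le\lvert\mathscr{A}\rvert$, the upper bounds of \lemref{3.4} on the non-primitive classes, and the lower bounds $\lvert\mathscr{A}_{2t}\rvert\ge 2^{t/2-1}$ and $\lvert\mathscr{A}_{\le 2t}\rvert\ge 2^{t/2}$ from \lemref{3.3} and its proof. The only difference is cosmetic: you use the single bound $2^{t/2-1}$ for both parities of $t$, whereas the paper treats even and odd $t$ separately, and both routes give the same $O(t\,2^{-t/4})$ decay.
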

\begin{proof}
    \begin{enumerate}
    \item    Using \lemref{3.4}, we have
    $$\lvert\,\mathscr{A}_{2t}\,\rvert - \frac{t}{2} 2^{\frac{t+2}{4}}\leq \lvert\,\mathscr{A}^p_{2t}\,\rvert \leq \lvert\,\mathscr{A}_{2t}\,\rvert$$ 
    Dividing by $\lvert\,\mathscr{A}_{2t}\,\rvert$ 
    $$1 - \frac{\frac{t}{2} 2^{\frac{t+2}{4}}}{\lvert\,\mathscr{A}_{2t}\,\rvert}\leq \frac{\lvert\,\mathscr{A}^p_{2t}\,\rvert}{\lvert\,\mathscr{A}_{2t}\,\rvert} \leq 1 $$
    observe from \lemref{3.3} that when $t$ is odd, $\lvert\,\mathscr{A}_{2t}\,\rvert \geq 2^{t/2}$, and when $t$ is even, $\lvert\,\mathscr{A}_{2t}\,\rvert=2^{\frac{t}{2}-1}$, therefore
     $$1 - \frac{\frac{t}{2} 2^{\frac{t+2}{4}}}{2^{t/2}}\leq \frac{\lvert\,\mathscr{A}^p_{2t}\,\rvert} {\lvert\,\mathscr{A}_{2t}\,\rvert} \leq 1 \text{ and } 1 - \frac{\frac{t}{2} 2^{\frac{t+2}{4}}}{2^{\frac{t}{2}-1}}\leq \frac{\lvert\,\mathscr{A}^p_{2t}\,\rvert} {\lvert\,\mathscr{A}_{2t}\,\rvert} \leq 1 $$
    $$1 - \frac{t}{2\cdot2^{\frac{t-2}{4}}}\leq \frac{\lvert\,\mathscr{A}^p_{2t}\,\rvert}{\lvert\,\mathscr{A}_{2t}\,\rvert} \leq 1 \text{ and } 1 - \frac{t}{2\cdot2^{\frac{t-6}{4}}}\leq \frac{\lvert\,\mathscr{A}^p_{2t}\,\rvert}{\lvert\,\mathscr{A}_{2t}\,\rvert} \leq 1$$
    Since $\dfrac{t}{2^{\frac{t-2}{4}+1}}$ and $\dfrac{t}{2^{\frac{t-6}{4}+1}}$ tends to $0$ as $t \to \infty$
    \\
    \\
    Hence,  $\lvert\,\mathscr{A}^p_{2t}\,\rvert\, \sim \lvert\,\mathscr{A}_{2t}\,\rvert.$
    \item For this part, we use the same arguments as above. We have from \lemref{3.4},
    $$\lvert\,\mathscr{A}_{\leq2t}\,\rvert - \frac{t^2}{2}2^{\frac{t+2}{4}}\leq \lvert\,\mathscr{A}^p_{\leq2t}\,\rvert \leq \lvert\,\mathscr{A}_{\leq2t}\,\rvert$$
     Dividing by $\lvert\,\mathscr{A}_{\leq2t}\,\rvert$ and observing that $\lvert \;\mathscr{A}_{\leq2t}\;\rvert\, \geq\,2^{t/2} $, it follows that
     $$1 - \dfrac{\frac{t^2}{2}2^{\frac{t+2}{4}}}{ 2^{t/2}}\leq \frac{\lvert\,\mathscr{A}^p_{\leq2t}\,\rvert}{\lvert\,\mathscr{A}_{\leq2t}\,\rvert} \leq 1 $$
     $$\imp 1 - \dfrac{t^2}{ 2^{\frac{t+2}{4}}}\leq \frac{\lvert\,\mathscr{A}^p_{2t}\,\rvert}{\lvert\,\mathscr{A}_{2t}\,\rvert} \leq 1.$$
     We know that $\dfrac{t^2}{ 2^{\frac{t+2}{4}}}\to 0$ as $t \to \infty.$ Hence,  $\lvert\,\mathscr{A}^p_{\leq2t}\,\rvert\, \sim \lvert\,\mathscr{A}_{\leq2t}\,\rvert.$
    \end{enumerate}
    This gives a proof. 
\end{proof}

\medskip From the results proved above, we can now give an exact count of ambiguous classes.
\subsection{Proof of \thmref{ambiguous}}
\begin{enumerate}
    \item By \lemref{3.3}, we have
\[
\lvert\,\mathscr A_{\le 2t}\,\rvert \simeq 2^{t/2}.
\]
In simple terms, as $t$ increases, the number of ambiguous classes of length at most $2t$ grows exponentially, with a growth rate comparable to $2^{t/2}$.
In other words, up to multiplicative constants, the growth rate of $\lvert\,\mathscr A_{\le 2t}\,\rvert$ is the same as that of $2^{t/2}$.

\item By \lemref{3.5}, we have
$\lvert\,\mathscr A^p_{\le 2t}\,\rvert \sim \lvert\,\mathscr A_{\le 2t}\,\rvert$ as  $t \to \infty$. 
This shows that the number of primitive ambiguous classes of length at most $2t$ is asymptotically equal
to the total number of ambiguous classes of length at most $2t$. \qed
\end{enumerate}
\bibliographystyle{plain}
	{\bibliography{ref}}

\end{document}